\documentclass{amsart}
\usepackage[pagewise]{lineno}
\usepackage{graphicx}
\usepackage{stmaryrd}
\usepackage{txfonts}
\usepackage{color}
\usepackage[dvipsnames]{xcolor}

\usepackage[T1]{fontenc}
\usepackage{lmodern}

\usepackage[colorlinks=true,
            linkcolor=violet,
            urlcolor=Aquamarine,
            citecolor=YellowOrange]{hyperref}

\vfuzz2pt 
\hfuzz2pt 

\newtheorem{thm}{Theorem}[section]

\newtheorem{lem}[thm]{Lemma}

\newtheorem{defn}[thm]{Definition}
\newtheorem{rem}[thm]{Remark}

\newcommand{\norm}[1]{\left\Vert#1\right\Vert}
\newcommand{\abs}[1]{\left\vert#1\right\vert}
\newcommand{\set}[1]{\left\{#1\right\}}
\newcommand{\Real}{\mathbb R}

\newcommand{\pfrac}[2]{\frac{\partial #1}{\partial #2}}

\definecolor{darkgreen}{rgb}{0,0.5,0}
\definecolor{darkred}{rgb}{0.7,0,0}

\title[Uniqueness of tangent cone]{
\bf
Uniqueness of tangent cone for biharmonic map\\
with isolated singularity
}
\author{Youmin Chen}
\address{School of Mathematical Sciences, University of Science and Technology of China}%
\email{chym189@mail.ustc.edu.cn}%
\author{Hao Yin}
\address{School of Mathematical Sciences, University of Science and Technology of China}%
\email{haoyin@ustc.edu.cn}%
\thanks{The research work of Hao Yin is supported by NSFC 11471300.}%

\date{\today~(@\the\time mpm)}


\begin{document}
\maketitle

\begin{abstract}
In this paper, we study the problem of uniqueness of tangent cone for minimizing extrinsic biharmonic maps. Following the celebrated result of Simon, we prove that if the target manifold is a compact analytic submanifold in $\Real^p$ and if there is one tangent map whose singularity set consists of the origin only, then this tangent map is unique.
\end{abstract}

\section{Introduction}
In this paper, we prove the biharmonic map version of the celebrated result of Simon \cite{simon1983asymptotics}. Here we restrict ourselves to the case of extrinsic biharmonic maps. Let $B\subset \Real^m$ be the unit ball around the origin and $N$ be a closed Riemannian manifold isometrically embedded in $\Real^p$. A map $u\in W^{2,2}(B,\Real^p)$ into $N$ is called (extrinsic) biharmonic map if and only if  it is the critical point of the energy
\begin{equation}
	E(u)=\int_B \abs{\triangle u}^2 dx.
	\label{eqn:energy}
\end{equation}
It is called a minimizing (biharmonic) map if for any $B_r(x)\subset B$ and $W^{2,2}$ maps $v$ with $v\equiv u$ on $B\setminus B_r(x)$, we have
\begin{equation*}
	E(v)\geq E(u).
\end{equation*}

Since the pioneering work of Chang, Wang and Yang \cite{chang1999regularity}, many authors studied the regularity problem of biharmonic maps, see \cite{strzelecki2003,wang2004biharmonic,wang2004remarks,wang2004stationary,hong2005,lamm2008}. Roughly speaking, stationary biharmonic maps are regular away from a singularity set of co-dimension $4$. For minimizing maps, one expects better regularity since it was proved by Schoen and Uhlenbeck \cite{schoen1982regularity} that minimizing harmonic maps are regular away from a singularity of co-dimension $3$. Moreover, Luckhaus \cite{luckhaus1988partial} proved the compactness of minimizing harmonic maps using a lemma which was later named after him. This compactness is crucial to the theory of singularity set of minimizing harmonic maps. We refer the readers to the book of Simon \cite{simon1996theorems} for a nice presentation of this deep theory.  The limit of a sequence of minimizing biharmonic maps was studied by Scheven in \cite{scheven2008dimension}. Instead of proving the direct analogue of Luckhaus lemma, the author studied the defect measure after Lin \cite{lin1999gradient}. In particular, it was shown that the limit is a stationary biharmonic map, which implies that the singularity set of minimizing biharmonic maps is of co-dimension $5$. The interesting problem of whether this limit is minimizing remains open.

Thanks to the result of Scheven, we may study the tangent map at a singular point of a minimizing biharmonic map. The problem of uniqueness of such tangent maps is usually very difficult. Simon \cite{simon1983asymptotics} set up a general framework to attack such problem under a set of assumptions. The argument has been adapted to many different problems, for example, to minimal submanifolds \cite{simon1983asymptotics}, to Yang-Mills field \cite{yang2003uniqueness}, to Einstein metric \cite{colding2014uniqueness,cheeger1994cone}. To the best of our knowledge, all such generalizations are about the isolated singularity of solutions to some second order partial differential equation. It is the purpose of this paper to show that this argument also works in the case of fourth order problem. More precisely, we prove

\begin{thm}
	\label{thm:main}
	Suppose $N$ is an analytic submanifold of the Euclidean space $\Real^p$ and $u:B\to N$ is a minimizing biharmonic map (with finite energy), where $B\subset \Real^m (m\geq 5)$ is the unit ball. If $0$ is a singularity of $u$ and one of the tangent maps of $u$ at $0$ is of the form $\varphi(\frac{x}{\abs{x}})$ for some smooth $\varphi:S^{m-1}\to N$, then this tangent map is the {\it unique} tangent map at $0$.
\end{thm}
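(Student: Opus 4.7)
The plan is to adapt Simon's three-step framework for uniqueness of tangent cones from second-order problems (harmonic maps, minimal surfaces) to the fourth-order biharmonic setting. The three ingredients are: (i) monotonicity of a density at the singularity, (ii) a \L{}ojasiewicz-Simon inequality for an associated reduced energy on the sphere, and (iii) a differential inequality combining (i) and (ii) which, after integration, forces the blow-ups to converge.

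First I would use the monotonicity of the scaled biharmonic density $\Theta_u(r) := r^{4-m}\int_{B_r}\abs{\triangle u}^2\,dx$, together with the boundary correction terms that vanish on any $0$-homogeneous cone, known in the stationary case from Chang-Wang-Yang and exploited by Scheven. Monotonicity guarantees a well-defined density at $0$ and subconvergence of the blow-ups $u_r(x) := u(rx)$ to tangent maps; by hypothesis, one such limit is the smooth cone $\varphi(x/\abs{x})$. Passing to cylindrical coordinates $t=-\log\abs{x}$, $\omega = x/\abs{x}$, the tangent map becomes $t$-independent and $\varphi$ arises as a critical point on $S^{m-1}$ of a reduced functional $F$ obtained by inserting the $t$-independent ansatz into the biharmonic energy (schematically, $F$ is built from the intrinsic derivatives of $\varphi$ on $S^{m-1}$ up to order two, coupled to the embedding $N\hookrightarrow\Real^p$).

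Second, since $N$ is analytic, I would establish a \L{}ojasiewicz-Simon inequality
\begin{equation*}
\abs{F(w) - F(\varphi)}^{1-\theta} \leq C\,\norm{\nabla F(w)}_{*}
\end{equation*}
for some $\theta\in(0,1/2]$ and $w$ in a $W^{2,2}(S^{m-1},N)$-neighborhood of $\varphi$. The standard route is Lyapunov-Schmidt reduction onto the (finite-dimensional) kernel of the linearization $L_\varphi$, a self-adjoint fourth-order elliptic operator on sections of $\varphi^{-1}TN$, followed by the classical analytic \L{}ojasiewicz inequality in finite dimensions. Combined with the differential form of monotonicity, this yields an inequality of the shape
\begin{equation*}
-\frac{d}{dt}\bigl(\Theta_u(e^{-t}) - \Theta_u(0^+)\bigr)^\theta \geq c\,\norm{\partial_t u}_{L^2(S^{m-1})},
\end{equation*}
which integrates to $\int_0^{\infty}\norm{\partial_t u}_{L^2(S^{m-1})}\,dt < \infty$. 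The absolute summability of $\partial_t u$ rules out oscillation of the rescalings, giving $L^2$-convergence of $u_r$ to a unique tangent map; upgrade to stronger (and in particular pointwise) convergence then follows from $\eps$-regularity for biharmonic maps.

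The main obstacle I expect is the \L{}ojasiewicz-Simon step in this fourth-order, $N$-valued setting. One must choose function spaces simultaneously compatible with the fourth-order linearization (the natural pairing being $W^{2,2}$ versus $W^{-2,2}$), adapted to the manifold constraint (requiring projection onto $\varphi^{-1}TN$ and careful bookkeeping of the quadratic error introduced by the second fundamental form of $N\hookrightarrow\Real^p$), and strong enough that the Lyapunov-Schmidt kernel captures all the infinitesimal deformations of $\varphi$ through critical points. A secondary difficulty is that, in the absence of a Luckhaus-type lemma for minimizing biharmonic maps (noted as open in the introduction), it takes genuine work to ensure that $u_r$ actually lies in the small $W^{2,2}$-neighborhood of $\varphi$ over the full range of scales where the \L{}ojasiewicz inequality is to be applied; I would handle this by combining Scheven's $\eps$-regularity with interior fourth-order elliptic estimates on dyadic annular regions, bootstrapped from an initial good scale produced by the hypothesis on the tangent map.
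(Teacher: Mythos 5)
There is a genuine gap, and it sits exactly at the sentence you pass over in one line: ``Combined with the differential form of monotonicity, this yields an inequality of the shape $-\frac{d}{dt}\bigl(\Theta_u(e^{-t})-\Theta_u(0^+)\bigr)^\theta \geq c\,\norm{\partial_t u}_{L^2(S^{m-1})}$.'' In the second-order setting this is exactly Simon's argument, but in the fourth-order setting it is the crux of the whole proof and it does not go through in this pointwise-in-$t$ form. In cylindrical coordinates the biharmonic map equation is fourth order in $t$; when you multiply $\triangle^2 u$ by $\partial_t u$ and integrate over $S^{m-1}$, the derivative of the relevant reduced energy $F(u(t))$ equals a nonnegative quadratic form in $(\partial_t u,\partial_t\nabla_{S^{m-1}}u,\partial_t^2u)$ \emph{plus} an exact $t$-derivative of a signless expression (terms like $\partial_t^3u\cdot\partial_t u-\tfrac12\abs{\partial_t^2u}^2$), which cannot be absorbed or estimated pointwise in $t$ by $\norm{\partial_t u(t)}_{L^2}$. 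So no clean differential inequality for the density (or for $F(u(t))-F(\varphi)$) is available. The paper's route is different: it defines the tail quantity $G(t)$ built from the positive quadratic form, controls the signless boundary term and the $L^2$ norm of the gradient of $F$ at time $s$ by $\int_{s-1}^{s+1}$-integrals of that form via interior elliptic estimates for $\partial_t u$ (which solves the linearized system, Lemmas \ref{lem:l2implyc3} and \ref{lem:dtestimate}), and ends up with a \emph{discrete inequality with time delay}, $G(s+1)\leq C\left(G(s-1)-G(s+1)\right)^{1/(2-\alpha)}$, which then needs the elementary Lemma \ref{lem:discrete} and a continuation argument to yield $\int\norm{\partial_t u}_{L^2}\,dt<\infty$. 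Without supplying this mechanism (or a genuine substitute), your step (iii) is an assertion, not a proof; note also that the paper states explicitly that the \L{}ojasiewicz--Simon inequality is \emph{not} the main obstacle here, so your assessment of where the difficulty lies is inverted.

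Two smaller points. First, the reduced functional is not obtained by naively inserting the $0$-homogeneous ansatz into the biharmonic energy (that would produce only $\int_{S^{m-1}}\abs{\triangle_{S^{m-1}}\varphi}^2$); the correct functional is $F(\varphi)=\int_{S^{m-1}}\abs{\triangle_{S^{m-1}}\varphi}^2+(2m-8)\abs{\nabla_{S^{m-1}}\varphi}^2\,d\theta$, read off from the equation satisfied by the section, and the lower-order coefficient $(2m-8)$ is used throughout the dynamic estimate. Second, your idea for keeping $u$ in a good neighborhood of $\varphi$ across scales (Scheven's strong compactness plus $\eps$-regularity and annular elliptic estimates, in lieu of a Luckhaus lemma) is essentially what the paper does via its improved $\eps$-regularity (Lemma \ref{lem:regularity}) and the ``$L^2$ closeness implies $C^5$ closeness'' lemma, so that part of your plan is sound, though you would still need the quantitative $C^5$ (not just $W^{2,2}$) closeness because the \L{}ojasiewicz inequality is applied in a $C^5$ neighborhood.
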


Suppose that $(r,\theta)$ is the polar coordinates in $B$ and that $t=-\log r$, then the theorem claims that $\lim_{t\to \infty} u(t)$ exists (and therefore is unique). As is well known, this is related to an estimate on the speed of convergence of $\partial_t u$ to zero when $t\to +\infty$. It is not hard to derive from the monotonicity formula (see (2.4) of \cite{scheven2008dimension} and \eqref{eqn:temp} in this paper) that
\begin{equation}\label{intro:l2}
	\int_1^{+\infty} \norm{\partial_t u}^2_{L^2(S^{m-1})} < +\infty.
\end{equation}
Here $S^{m-1}$ is the unit sphere in $\Real^m$.
If we can show
\begin{equation}\label{intro:l1}
	\int_1^{+\infty} \norm{\partial_t u}_{L^2(S^{m-1})} < +\infty,
\end{equation}
then we know at least $u(t)$ converges to a unique limit in the sense of $L^2$ norm.  However, in general, (\ref{intro:l1}) does not follow from (\ref{intro:l2}).

Simon \cite{simon1983asymptotics} observed that an infinite-dimensional version of Lojasiewicz inequality is helpful here. In the case of harmonic map, $u(t)$ is regarded as a family of maps from $S^{m-1}$ into $N$ evolving by some second order (abstract) ODE.
\begin{equation}\label{intro:ode}
	u''-(m-2)u'=\nabla \mathcal E_{S^{m-1}}(v) + R,
\end{equation}
where $u'=\partial_t u$, $\mathcal E_{S^{m-1}}$ is the harmonic map energy on $S^{m-1}$ and $R$ is some small perturbation term. A stationary point of this ODE (i.e. a solution independent of $t$) is the smooth map $\varphi$ in the assumptions of the theorem (in the harmonic map case). With the help of Lojasiewicz inequality, he studied the dynamics of this second order ODE in a small neighborhood of $\varphi$. More precisely, he proved (in Chapter 3 of his book \cite{simon1996theorems})
\begin{equation}\label{intro:odeinequality}
	\left( \int_{t+1}^{+\infty} \norm{\partial_t u}_{L^2(S^{m-1})}^2\right)^{2-\alpha} \leq  C \int_{t-1}^{t+1}\norm{\partial_t u}_{L^2(S^{m-1})}^2
\end{equation}
for any $t$ and some $\alpha\in (0,1)$. This amounts to (up to technical issue) an ordinary differential inequality of $h(t)=\int_t^{+\infty} \norm{\partial_t u}_{L^2(S^{m-1})}^2$,
\begin{equation*}
	h(t)^{2-\alpha}\leq C (-h'(t)).
\end{equation*}
From this inequality, it is easy to derive some decay estimate that implies (\ref{intro:l1}).

To generalize this argument to the biharmonic map case, we found that the Lojasiewicz inequality is not a problem because it is a general property of analytic functions, and the Lyapunov-Schmidt reduction works as long as the gradient of the functional is elliptic. The difficulty is to find the correct counter part of (\ref{intro:odeinequality}). We will eventually prove a discrete version of ordinary differential inequality with time delay (see \eqref{eqn:likesimon}).
Fortunately, we can still derive the decay estimate we need from it.

The paper is organized as follows. We recall some basic properties of biharmonic maps in Section \ref{sec:pre}. In particular, we prove an improved $\varepsilon$-regularity lemma of Schoen and Uhlenbeck type (see Proposition 4.5 of \cite{schoen1982regularity}). In Section \ref{sec:LS}, we prove the Lojasiewicz inequality (following \cite{simon1996theorems}). Section \ref{sec:growth} is the most important part of this paper, which contains the derivation of our analogue of (\ref{intro:odeinequality}). Finally, we give the proof of Theorem \ref{thm:main} in Section \ref{sec:proof} following the framework of Simon \cite{simon1996theorems}.

\section{Preliminaries on biharmonic maps}\label{sec:pre}
In this section, we collect a few results, mainly PDE estimates, that are needed for the proof of our main theorem.

We start by introducing the Euler-Lagrange equation for extrinsic biharmonic energy $E(u)$ (see Proposition 2.2 of \cite{wang2004stationary}),
\begin{equation}
	\triangle^2 u = \triangle(A(u)(\nabla u,\nabla u)) + 2 \nabla \cdot \langle \triangle u, \nabla(P(u))\rangle - \langle \triangle (P(u)),\triangle u\rangle.
	\label{eqn:EL}
\end{equation}
Here $A$ is the second fundamental form of $N$ in $\Real^p$ and $P(u)$ is the projection from $\Real^p$ to $T_uN$.
When $u$ is a smooth (extrinsic) biharmonic map, this is equivalent to the statement that $\triangle^2 u$ is perpendicular to $T_uN$ in $\Real^p$. Often in the following discussion, this simpler form is good enough.

\subsection{An improved $\varepsilon$-regularity}
The famous $\varepsilon$-regularity theorem for stationary harmonic maps requires that the (rescaled) energy is small on a ball. It has a biharmonic map analogue as follows:

\begin{lem}[\cite{wang2004stationary},\cite{struwe2008partial},\cite{scheven2008dimension}]\label{lem:epsilon} There exist $\varepsilon_1>0$ and constants $C(k)$ only depending on $N$ such that if $u$ is a stationary (extrinsic) biharmonic map on $B_r(x)\subset \Real^m (m\geq 5)$ satisfying
	\begin{equation}\label{eqn:smallness}
		r^{4-m}\int_{B_r(x)} (\abs{\nabla^2 u}^2 + r^{-2} \abs{\nabla u}^2) dx \leq \varepsilon_1,
	\end{equation}
	then
	\begin{equation*}
		\sup_{B_{r/2}(x)} r^k \abs{\nabla^k u} \leq C(k)\qquad \forall k\in \mathbb N.	
	\end{equation*}
\end{lem}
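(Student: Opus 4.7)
The plan is to reduce to unit scale, quote the known base-case $\varepsilon$-regularity from the cited papers, and then bootstrap. Replacing $u$ by $\tilde u(y)=u(x+ry)$ converts the hypothesis \eqref{eqn:smallness} and the conclusion into scale-invariant quantities, so we may assume $r=1$ and $x=0$. The conclusion for general $r$ then follows by scaling back, since $|\nabla^k \tilde u|(y)=r^k|\nabla^k u|(x+ry)$.

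The base case — that the smallness \eqref{eqn:smallness} forces $u \in C^{0,\alpha}(B_{3/4})$ (indeed, with bounds on $\nabla u$) — is precisely the content of the stationary biharmonic $\varepsilon$-regularity proved in \cite{wang2004stationary, struwe2008partial, scheven2008dimension}. In those papers, one exploits the structure of \eqref{eqn:EL} (via Hodge/Riesz decomposition of the right-hand side, or comparison with a biharmonic function) to obtain Morrey decay of the rescaled energy, from which Hölder continuity follows. We simply quote this input.

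With initial Hölder regularity in hand, the bootstrap is routine. Write \eqref{eqn:EL} schematically as
\begin{equation*}
\triangle^2 u = F(u, \nabla u, \nabla^2 u, \nabla^3 u),
\end{equation*}
where $F$ depends smoothly on $u$ and polynomially on the derivatives, with at most linear dependence on $\nabla^3 u$ (that term carrying a factor of $\nabla u$) and quadratic dependence on $\nabla^2 u$. Once $u \in C^{1,\alpha}$, the $\nabla^3 u$ coefficient is small in $C^{0,\alpha}$ (by Hölder continuity of $\nabla u$ and the initial smallness), so standard $L^p$ and Schauder theory for $\triangle^2$ give $u \in W^{4,p}$ for all $p<\infty$, hence $u \in C^{3,\alpha}$. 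Iterating, $u \in C^\infty$, with each $\|u\|_{C^{k,\alpha}(B_{3/4})}$ bounded by a constant depending only on $N$, $k$ and $\varepsilon_1$. Rescaling yields $\sup_{B_{r/2}(x)} r^k|\nabla^k u| \leq C(k)$ as claimed.

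The main obstacle conceptually is the base-case Hölder estimate, which is where the fourth-order analogue of Schoen-Uhlenbeck's argument requires real work; but as this is handled in the cited literature, our task reduces to the bookkeeping of the bootstrap and the verification that the constants are independent of the rescaling. A minor technical point is that the $\nabla^3 u$ term in $F$ prevents a naive interpretation of $\triangle^2 u$ as purely nonlinear in the top two orders; we must either absorb it via smallness or read it as part of a variable-coefficient fourth-order operator, both of which are standard once $\nabla u$ has a modulus of continuity.
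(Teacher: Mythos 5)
Your proposal is correct and consistent with the paper, which in fact offers no proof of this lemma at all: it is quoted directly from the cited references (Wang, Struwe, Scheven), exactly the input you invoke for the base case. Your reduction by scaling plus the standard fourth-order bootstrap is the same route taken in that literature, so there is nothing substantive to compare.
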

\begin{rem}
Here and throughout the paper, $B_r(x)$ means the ball of radius $r$ centered at $x$, which is usually omitted if $x=0$. Also the subscript $r$ is omitted if $r=1$.
\end{rem}

For minimizing harmonic maps, this result can be improved in the sense that a smallness condition on $\fint_{B_r(x)} \abs{u-u^*}^2 dx$ replaces \eqref{eqn:smallness}, where $u^*$ is the average of $u$ on $B_r(x)$ (see Proposition 4.5 of \cite{schoen1982regularity}). The improved version plays an important role in the analysis of minimal tangent maps and the uniqueness of tangent cones of harmonic maps. Therefore, we also need a biharmonic map version of it.

Since the extension lemmas in \cite{schoen1982regularity, luckhaus1988partial} are not available for biharmonic maps, the original proof in \cite{schoen1982regularity} does not work here. Fortunately, in Theorem 1.5 of \cite{scheven2008dimension}, Scheven proved that if $u_i$ is a sequence of minimizing biharmonic maps with bounded total energy, then there is a subsequence converging {\it strongly} to a stationary biharmonic map. More precisely, we have

\begin{lem}[Proposition 1.5 of \cite{scheven2008dimension}]
	\label{lem:scheven} Suppose that $u_i:B_2\to N$ is a sequence of minimizing biharmonic maps with bounded energy. Then there is a subsequence $u_{i_j}$ that converges {\emph strongly} to a stationary biharmonic map on $B_1$.
\end{lem}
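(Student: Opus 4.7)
The plan is to extract a weak $W^{2,2}$ limit by routine compactness, upgrade to strong convergence via a defect measure argument that uses the minimizing property in an essential way, and then pass to the limit in the Euler--Lagrange equation \eqref{eqn:EL} to identify the limit as a stationary biharmonic map.

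First I would use the bounded energy assumption, together with the pointwise constraint $u_i(B_2)\subset N$ (so $u_i$ is uniformly $L^\infty$) and interior elliptic estimates for $\triangle^2$, to obtain a uniform $W^{2,2}(B_1)$ bound. A diagonal extraction yields a subsequence (still denoted $u_i$) with $u_i\rightharpoonup u_\infty$ weakly in $W^{2,2}_{\mathrm{loc}}(B_2)$, strongly in $W^{1,p}_{\mathrm{loc}}$ for every $p<\infty$ by Rellich--Kondrachov, and pointwise almost everywhere, so that $u_\infty(x)\in N$ for a.e.\ $x$.

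The main obstacle is upgrading weak to strong $W^{2,2}_{\mathrm{loc}}$ convergence. Following Lin's strategy adapted to the biharmonic setting, I would introduce the defect measure
\[
\mu := \mathrm{w}^*\text{-}\lim_{i\to\infty}\bigl(\abs{\triangle u_i}^2 - \abs{\triangle u_\infty}^2\bigr)\,dx
\]
and aim to show $\mu\equiv 0$. Since each $u_i$ is stationary biharmonic, the monotonicity formula (recalled in the introduction of this paper) passes to the limit and, combined with the $\varepsilon$-regularity of Lemma \ref{lem:epsilon}, forces $\mu$ to be concentrated on a set of codimension at least $4$. To kill $\mu$, I would argue by contradiction using the minimizing property: if $\mu$ carried positive mass near some $x_0$, then after rescaling one would construct a $W^{2,2}$ competitor that equals $u_i$ outside a small ball around $x_0$ and agrees with a suitably perturbed and projected copy of $u_\infty$ inside, with strictly smaller energy than $u_i$ for large $i$, contradicting minimality. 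The delicate point is that a $W^{2,2}$ comparison requires matching both $u$ and $\nabla u$ on the boundary of the ball, which is precisely why the standard Luckhaus slicing for harmonic maps does not carry over; the fix must be a careful annular interpolation compatible with the projection onto $N$.

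Once $\mu\equiv 0$, strong $W^{2,2}_{\mathrm{loc}}$ convergence holds, and together with the strong $W^{1,p}_{\mathrm{loc}}$ convergence it suffices to pass to the limit in the distributional form of \eqref{eqn:EL}, as well as in the stress--energy identity obtained from inner variations. This identifies $u_\infty$ as a stationary biharmonic map on $B_1$. The hard part throughout is the competitor construction used to show $\mu\equiv 0$: without a biharmonic analogue of Luckhaus' extension lemma, the entire argument hinges on a controlled $W^{2,2}$ interpolation that respects the manifold constraint.
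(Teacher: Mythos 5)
Your proposal sets out to re-prove Scheven's compactness theorem from scratch, whereas the paper simply cites it: the lemma is labeled ``Proposition~1.5 of \cite{scheven2008dimension},'' and the paper's proof is a two-sentence reduction. Compactness of $N$ gives a uniform $L^\infty$ bound on $u_i$; combined with the energy bound and interior elliptic estimates for $\triangle$ this yields a uniform $W^{2,2}(B_{3/2})$ bound; one then invokes Scheven's result verbatim to extract a strongly $W^{2,2}$-convergent subsequence, and notes that stationarity is preserved under strong $W^{2,2}$ convergence. Nothing more is needed, because the lemma \emph{is} Scheven's theorem.

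Your from-scratch outline (weak limit, defect measure \`a la Lin, concentration on a low-dimensional set via monotonicity and $\varepsilon$-regularity, elimination of the defect measure via minimality) is in spirit the route Scheven himself takes, and your reduction to a $W^{2,2}$ bound and your passage-to-the-limit for stationarity are fine. But the decisive step --- constructing a $W^{2,2}$ competitor that matches $u_i$ and $\nabla u_i$ across an annulus while respecting the constraint $u\in N$, with a quantitative energy gain --- is exactly what you flag as ``the hard part'' and then leave unfilled. That is not a technicality: the lack of a Luckhaus-type extension lemma in the $W^{2,2}$ setting is precisely the obstruction that this paper stresses (see the discussion preceding Lemma~\ref{lem:regularity}), and it is the entire mathematical content of Scheven's compactness result. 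As written, your argument therefore has a genuine gap at the point where all the difficulty is concentrated, whereas the paper sidesteps it by taking Scheven's Proposition~1.5 as a black box.
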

\begin{proof}
	Since we have assumed that $N$ is compact, then $u_i$'s are uniformly bounded on $B_2$. The energy bound then implies that $\norm{u_i}_{W^{2,2}(B_{3/2})}$ is bounded, so that we can use Proposition 1.5 of \cite{scheven2008dimension} to get a subsequence converging to $u$ strongly in $W^{2,2}$. $u$ is stationary because the minimizers are stationary and the property of being stationary is preserved in strong limit.
\end{proof}

We then combine Lemma \ref{lem:scheven} and Lemma \ref{lem:epsilon} to get the biharmonic version of Proposition 4.5 of \cite{schoen1982regularity}.

\begin{lem}[biharmonic map version of Proposition 4.5 in \cite{schoen1982regularity}]\label{lem:regularity}
	For $\Lambda>0$ fixed, there is $\varepsilon_2=\varepsilon_2(N,\Lambda)>0$ such that the following holds. Suppose that $u:B_2\to N$ is a minimizing biharmonic map with $E(u)<\Lambda$ and $B_r(x)\subset B_1$. If
	\begin{equation*}
		r^{-m} \int_{B_r(x)} \abs{u-q}^2 dx \leq \varepsilon_2
	\end{equation*}
	for some $q\in N$, then
	\begin{equation*}
		\sup_{B_{r/4}(x)} r^k \abs{\nabla^k u}\leq C(k)\qquad \forall k\in \mathbb N,
	\end{equation*}
	for some $C(k)>0$.
\end{lem}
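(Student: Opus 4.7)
The plan is to prove Lemma \ref{lem:regularity} by contradiction, rescaling, and combining Scheven's strong compactness (Lemma \ref{lem:scheven}) with the $\varepsilon$-regularity of Lemma \ref{lem:epsilon}. I would assume, for some $k \in \mathbb N$, the existence of a sequence $u_i : B_2 \to N$ of minimizing biharmonic maps with $E(u_i) < \Lambda$, balls $B_{r_i}(x_i) \subset B_1$ and points $q_i \in N$ satisfying
$$r_i^{-m}\int_{B_{r_i}(x_i)} |u_i - q_i|^2 dx \longrightarrow 0,$$
yet with $\sup_{B_{r_i/4}(x_i)} r_i^k |\nabla^k u_i|$ unbounded. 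I then rescale by setting $v_i(y) := u_i(x_i + r_i y)$; since $|x_i| + r_i \leq 1$ and $r_i \leq 1$, each $v_i$ is a minimizing biharmonic map defined at least on $B_2$, and the smallness hypothesis transforms to $\int_{B_1} |v_i - q_i|^2 dy \to 0$.

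The key technical point is to bound the biharmonic energy of $v_i$ on $B_2$ uniformly in $i$. A naive estimate yields only $E_{B_2}(v_i) \leq r_i^{4-m} \Lambda$, which blows up as $r_i \to 0$. This is precisely where the monotonicity formula for stationary biharmonic maps (see, e.g., (2.4) of \cite{scheven2008dimension}) enters: the scale-invariant quantity $r^{4-m} \int_{B_r(x)} |\Delta u_i|^2 dx$, together with the lower-order correction terms, is dominated by its value at a fixed unit scale, and so is controlled by $C(\Lambda)$ uniformly in $r$ and $x$. This yields a uniform bound $E_{B_2}(v_i) \leq C(\Lambda)$. Lemma \ref{lem:scheven} then produces a subsequence of $v_i$ converging strongly in $W^{2,2}(B_1)$ to a stationary biharmonic map $v$. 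Since $N$ is compact we may assume $q_i \to q_\infty \in N$, and the $L^2$ smallness forces $v \equiv q_\infty$ on $B_1$; strong $W^{2,2}$ convergence then gives
$$\int_{B_{1/2}} \left( |\nabla^2 v_i|^2 + |\nabla v_i|^2 \right) dy \longrightarrow 0.$$

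For all sufficiently large $i$, this places $v_i$ in the regime of Lemma \ref{lem:epsilon} at scale $r = 1/2$, yielding uniform bounds $|\nabla^k v_i| \leq C(k)$ on $B_{1/4}$. Unscaling, $r_i^k |\nabla^k u_i| \leq 2^k C(k)$ on $B_{r_i/4}(x_i)$, contradicting the failure hypothesis. I expect the main obstacle to be exactly the uniform energy bound for the rescaled maps $v_i$: without the biharmonic monotonicity formula the argument breaks down when $r_i \to 0$, and one has to verify carefully that the full monotone quantity (not just the $|\Delta u|^2$ part) is dominated from above on $B_1 \subset B_2$ by the total energy $\Lambda$. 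A secondary (but routine) point is checking that the minimizing property is preserved under the rescaling $y \mapsto x_i + r_i y$, which follows because the rescaling is a diffeomorphism under which the biharmonic energy transforms by an overall constant.
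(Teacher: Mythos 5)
Your proposal is correct and follows essentially the same route as the paper: argue by contradiction, rescale $v_i(y)=u_i(x_i+r_iy)$, invoke the monotonicity formula to obtain a uniform energy bound for $v_i$ on $B_2$, apply Scheven's strong compactness (Lemma \ref{lem:scheven}) to conclude $v_i\to\text{const}$ strongly in $W^{2,2}$, and then feed the resulting energy smallness into Lemma \ref{lem:epsilon}. The only cosmetic difference is that the paper first reduces the statement to showing the $\varepsilon_1$-smallness hypothesis of Lemma \ref{lem:epsilon} on $B_{r/2}(x)$ and negates \emph{that}, whereas you negate the final pointwise conclusion directly; both lead to the same contradiction.
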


\begin{proof}
	By Lemma \ref{lem:epsilon}, it suffices to show
	\begin{equation*}
		(r/2)^{4-m} \int_{B_{r/2}(x)} \abs{\nabla^2 u}^2 + (r/2)^{-2}\abs{\nabla u}^2 dx \leq \varepsilon_1.
	\end{equation*}
	If otherwise, we have a sequence of minimizing biharmonic maps $u_i:B_2\to N$ with $E(u_i)<\Lambda$ such that for some $B_{r_i}(x_i)\subset B_1$, we have
	\begin{equation}\label{eqn:small}
	r_i ^{-m}\int_{B_{r_i}(x_i)} \abs{u_i-q_i}^2 dx \to 0
\end{equation}
and
\begin{equation}\label{eqn:large}
	(r_i/2)^{4-m} \int_{B_{r_i/2}(x)} \abs{\nabla^2 u_i}^2 + (r_i/2)^{-2} \abs{\nabla u_i}^2 dx \geq \varepsilon_1.
\end{equation}
Note that $r_i$ may converge to zero. Let $v_i(x)= u_i(r_i x + x_i)$. The monotonicity formula (see Lemma 5.3 of \cite{wang2004stationary}) tells us that
\begin{equation*}
	\int_{B_2(0)} \abs{\nabla^2 v_i}^2 + \abs{\nabla v_i}^2 dx \leq C(\Lambda).
\end{equation*}
By Lemma \ref{lem:scheven}, taking subsequence if necessary, $v_i$ converges to some stationary biharmonic map $v$ strongly in $W^{2,2}(B_1)$, which must be the trivial map due to (\ref{eqn:small}). Since the convergence is strong in $W^{2,2}$, we know that
\begin{equation*}
	\int_{B_1(0)} \abs{\nabla^2 v_i}^2+ \abs{\nabla v_i}^2 dx \to 0,
\end{equation*}
for $i$ sufficiently large. This is a contradiction with (\ref{eqn:large}) and therefore the lemma is proved.
\end{proof}

\subsection{Section of tangent cone}\label{sec:section}
Let $u$ be the minimizing biharmonic map in Theorem \ref{thm:main}. By the assumptions of the theorem, there is some sequence $r_i\to 0$ such that $u(r_i x)$ converges to a homogenous tangent map (which is biharmonic)
\begin{equation}\label{eqn:varphi}
	\tilde{\varphi}:=\varphi(\frac{x}{\abs{x}})
\end{equation}
and $\varphi$ is a smooth map from $S^{m-1}$ to $N$. It follows from Lemma \ref{lem:scheven} and Lemma \ref{lem:epsilon} that this convergence is in fact smooth convergence away from the origin.

Recall that in the harmonic map case, $\tilde{\varphi}$ is a harmonic map if and only if so is $\varphi$. Here for the biharmonic maps, the situation is somewhat different and it is the purpose of this subsection to characterize $\varphi$ that appears as the section of homogeneous biharmonic maps.

Let $(r,\theta)$ be the polar coordinates of $\Real^m$. A direct computation shows
\begin{eqnarray*}
	&& \triangle^2 \tilde{\varphi} \\
	&=& \left( \frac{\partial^2}{\partial r^2} + \frac{m-1}{r}\pfrac{}{r} + \frac{1}{r^2}\triangle_{S^{m-1}} \right)^2 \tilde{\varphi} \\
	&=& r^{-4}\left(  \triangle_{S^{m-1}}^2 \varphi -(2m-8) \triangle_{S^{m-1}}\varphi \right).
\end{eqnarray*}
If $\tilde{\varphi}$ is a biharmonic map, then $\triangle^2 \tilde{\varphi}\perp T_{\tilde{\varphi}}N$, which is equivalent to
\begin{equation}\label{eqn:section}
	\triangle_{S^{m-1}}^2 \varphi -(2m-8) \triangle_{S^{m-1}} \varphi \perp T_{\varphi}N.
\end{equation}
Instead of working out the explicit formula of (\ref{eqn:section}), it suffices for our purpose to note that it is the Euler-Lagrange equation of the energy functional
\begin{equation}\label{eqn:FF}
	F(\varphi):= \int_{S^{m-1}} \abs{\triangle_{S^{m-1}} \varphi}^2 + (2m-8) \abs{\nabla_{S^{m-1}} \varphi}^2 d\theta.
\end{equation}
Here we write $d\theta$ for the volume element on $S^{m-1}$ and $\varphi$ is a map from $S^{m-1}$ to $N$.

\subsection{$L^2$ closeness implies $C^5$ closeness}

Let $\varphi$ be the smooth section in Theorem \ref{thm:main}, which is a smooth critical map of $F$. We define
	\begin{equation*}
		\mathcal O_{L^2}(\sigma)=\left\{\psi:S^{m-1}\to N\, |\, \norm{\psi-\varphi}_{L^2(S^{m-1})}<\sigma  \right\}
	\end{equation*}
	and
	\begin{equation*}
		\mathcal O_{C^5}(\sigma)=\left\{\psi:S^{m-1}\to N\, |\, \norm{\psi-\varphi}_{C^5(S^{m-1})}<\sigma  \right\}.
	\end{equation*}

	Let $u$ be a smooth biharmonic map defined on $B\setminus \set{0}$ and $(t,\theta)$ be the cylinder coordinates. In this paper, we often regard $u(t)$ as a family of maps from $S^{m-1}$ to $N$. In the proof of our main theorem, these $u(t)$'s are often close to $\varphi$ in various sense. The next theorem roughly says that $L^2$-closeness (of $u(t)$) to $\varphi$ on some $t$-interval implies $C^5$-closeness in a smaller $t$-interval.

	\begin{lem}\label{lem:l2implyc3}
		For any $\sigma_1>0$, there is $\sigma_2>0$ (depending on $\sigma_1$, $\varphi$ and $N$) such that the following is true.
		Let $u(t,\theta)$ be as above. If
		\begin{equation*}
			u(s)\in \mathcal O_{L^2}(\sigma_2), \qquad \forall s\in (t_0-2,t_0+2),
		\end{equation*}
		then
		\begin{equation}\label{eqn:c3close}
			u(s)\in \mathcal O_{C^5}(\sigma_1),\qquad \forall s\in (t_0-1,t_0+1).
		\end{equation}
		Moreover, for some $C>0$ (depending on $\sigma_1$, $\varphi$ and $N$),
		\begin{equation}\label{eqn:dtbound}
			\sum_{k=1,2,3,4}\sum_{j=0}^{4-k} \abs{\partial_t^k \nabla_{S^{m-1}}^j u}(s,\theta)\leq C, \qquad \forall s\in (t_0-1,t_0+1).
		\end{equation}
	\end{lem}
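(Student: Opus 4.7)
My plan is a contradiction and compactness argument, rescaling the problem onto a fixed annulus away from the singular origin. I would start by assuming the conclusion fails: there exist $\sigma_1 > 0$ and sequences of smooth biharmonic maps $u_n$ on $B \setminus \{0\}$, times $t_n > 2$, and points $s_n \in (t_n - 1, t_n + 1)$ with
\[
\sup_{s \in (t_n - 2,\, t_n + 2)} \|u_n(s) - \varphi\|_{L^2(S^{m-1})} \to 0,
\]
but at the same time $\|u_n(s_n) - \varphi\|_{C^5(S^{m-1})} \geq \sigma_1$ (or \eqref{eqn:dtbound} fails at $(s_n, \theta_n)$ with constant $\to \infty$). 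Setting $v_n(x) := u_n(e^{-t_n} x)$ gives smooth biharmonic maps on the fixed annulus $A = \{e^{-2} \leq |x| \leq e^2\}$; in cylinder coordinates $v_n(\tau, \theta) = u_n(t_n + \tau, \theta)$, so $v_n(\tau) \to \varphi$ in $L^2(S^{m-1})$ uniformly for $\tau \in (-2, 2)$, while the compactness of $N$ gives a uniform $L^\infty$-bound on $v_n$.

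The crux is upgrading this $L^2$-closeness to smooth convergence on a smaller annulus $A' = \{e^{-1} \leq |x| \leq e\}$. For each $p \in A'$ and small $\rho$,
\[
\rho^{-m} \int_{B_\rho(p)} |v_n - \tilde\varphi(p)|^2\, dx \leq 2\rho^{-m} \|v_n - \tilde\varphi\|_{L^2(A)}^2 + 2 \sup_{B_\rho(p)} |\tilde\varphi - \tilde\varphi(p)|^2,
\]
where the second term is $O(\rho^2)$ by the smoothness of $\tilde\varphi$ on $A$ and the first vanishes as $n \to \infty$ for fixed $\rho$. Picking $\rho$ small then $n$ large puts the left-hand side below the $\varepsilon_2$ of Lemma \ref{lem:regularity}, and a covering argument on $A'$ yields uniform $C^k$-bounds on $v_n$ for every $k$. (I would use implicitly that $u$ in the main theorem is minimizing and that rescaling preserves minimality, so Lemma \ref{lem:regularity} applies to $v_n$.)

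A subsequence of $v_n$ then converges in $C^{k-1}(A')$, and by the $L^2$-convergence its limit must be $\tilde\varphi$; thus the whole sequence $v_n$ converges to $\tilde\varphi$ smoothly on $A'$. Back in cylinder coordinates this yields \eqref{eqn:c3close}, and, since $\partial_t = -r \partial_r$ is scale-invariant under $r \mapsto e^{-t_n} r$, the $C^k$-closeness of $v_n$ to $\tilde\varphi$ also delivers the pointwise bound \eqref{eqn:dtbound}, contradicting the assumption. The principal obstacle I anticipate is precisely this step upgrading $L^2$-closeness to $C^k$-control; the trick of expressing $L^2$-closeness to the local constant $\tilde\varphi(p)$ reduces it to a clean application of the Schoen--Uhlenbeck-type Lemma \ref{lem:regularity}, avoiding a more delicate $W^{2,2}$-bootstrap from \eqref{eqn:EL} that would be complicated by the critical biharmonic nonlinearity.
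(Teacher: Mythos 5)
Your proposal is correct and is essentially the paper's own argument: the crux in both is the same freezing trick of comparing $u$ on a small ball to the constant value $\tilde\varphi(p)\in N$, splitting into the $L^2$-distance to $\tilde\varphi$ plus the oscillation of the smooth map $\tilde\varphi$, and then invoking the Schoen--Uhlenbeck-type Lemma \ref{lem:regularity} (with minimality, and the energy bound, surviving the rescaling) to get uniform $C^k$ bounds on the middle annulus. The only difference is the finish: the paper argues directly, choosing the small radius and then $\sigma_2$ quantitatively and deducing \eqref{eqn:c3close} by interpolating between the resulting $C^6$ bound and the $L^2$ hypothesis, whereas you package the same information in a compactness/contradiction (Arzel\`a--Ascoli) argument.
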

	\begin{rem}\label{rem:obv}
		It is clear from the proof below that the lemma is still true for any $C^k$ neighborhood of $\varphi$ instead of $C^5$.
	\end{rem}

	\begin{proof}
		Although the lemma is stated in terms of $(t,\theta)$ coordinates, the proof is more clearly presented in the $(r,\theta)$ coordinates. By the scaling invariance of (\ref{eqn:EL}), we may assume that $t_0=2$ and study the equation (\ref{eqn:EL}) on $B_1\setminus B_{e^{-4}}$. By abuse of notation, we also write $\varphi$ for the function
		\begin{equation*}
			\varphi(r,\theta)= \varphi(\theta).
		\end{equation*}
		The assumption that $u(s)\in \mathcal O_{L^2}(\sigma_2)$ implies that there is a constant $C(\sigma_2)$ (satisfying $\lim_{\sigma_2\to 0} C(\sigma_2)=0$) such that
		\begin{equation}\label{eqn:L2bound}
			\int_{B_{1}\setminus B_{e^{-4}}} \abs{u-\varphi}^2 dx\leq C(\sigma_2).
		\end{equation}

		Since $\varphi$ is smooth, there is some constant $C_\varphi$ depending only on $\varphi$ such that
		\begin{equation}\label{eqn:uniformcont}
			\abs{\varphi (x)- \varphi(y)}\leq C_\varphi \abs{x-y}.
		\end{equation}
		for any $x,y\in B_{1}\setminus B_{e^{-4}}$.
		For some $y\in B_{e^{-1}}\setminus B_{e^{-3}}$, consider the ball $B_\sigma(y)\subset B_1\setminus B_{e^{-4}}$ for some $\sigma>0$ to be determined later. By \eqref{eqn:L2bound} and \eqref{eqn:uniformcont}, we have
		\begin{eqnarray*}
			&& \sigma^{-m} \int_{B_\sigma(y)} \abs{u(x)- \varphi(y)}^2 dx \\
			&\leq&  2 \sigma^{-m} \int_{B_\sigma(y)} \abs{u(x)- \varphi(x)}^2 dx +  2 \sigma^{-m} \int_{B_\sigma(y)} \abs{\varphi(x)-\varphi(y)}^2 dx  \\
			&\leq&  2\sigma^{-m} C(\sigma_2) + 2 \abs{B} C_\varphi^2 \sigma^2.
		\end{eqnarray*}
		Here $\abs{B}$ is the volume of the unit ball in $\Real^m$.

		Let $\varepsilon_2$ be the constant in Lemma \ref{lem:regularity}. We first take $\sigma$ small with $2 \abs{B} C_\varphi^2 \sigma^2< \varepsilon_2/2$ and then choose $\sigma_2$ sufficiently small so that $2\sigma^{-m}C(\sigma_2)<\varepsilon_2/2$. Hence, Lemma \ref{lem:regularity} gives
		\begin{equation}\label{eqn:C5bound}
			\norm{u}_{C^6(B_{e^{-1}}\setminus B_{e^{-3}})}\leq C,
		\end{equation}
		from which (\ref{eqn:dtbound}) follows.
		(\ref{eqn:c3close}) can be proved by interpolation between the $C^6$ bound \eqref{eqn:C5bound} and the $L^2$ bound \eqref{eqn:L2bound}, if we choose $\sigma_2$ smaller.
	\end{proof}

	\subsection{Estimates of $\partial_t u$}\label{sec:dtu}
Since being biharmonic is invariant under scaling and the group of scaling is generated by the vector field $r\partial_r=\partial_t$, if $u$ is a biharmonic map, then $\partial_t u$ satisfies the linearization equation of \eqref{eqn:EL}, which is a homogeneous linear elliptic system whose coefficients depend on $u$. Using this equation, we can prove

\begin{lem}
	\label{lem:dtestimate} If $u$ satisfies (\ref{eqn:c3close}) and $(\ref{eqn:dtbound})$ for $s\in (t_0-1,t_0+1)$, then we have
	\begin{equation}
		\sum_{k=1,2,3,4}\sum_{j=0}^{4-k} \abs{\partial_t^k \nabla_{S^{m-1}}^j u}^2(t_0,\theta)\leq \tilde{C} \int_{t_0-1}^{t_0+1} \int_{S^{m-1}} \abs{\partial_t u}^2 d\theta dt,
		\label{eqn:dtestimate}
	\end{equation}
	for some constant $\tilde{C}$ depending only on $\sigma_1$ (in \eqref{eqn:c3close}), $C$ (in \eqref{eqn:dtbound}) and the target manifold $N$.
\end{lem}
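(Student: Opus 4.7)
The plan is to view $v := \partial_t u$ as a solution of the linearization of the biharmonic map equation at $u$, and then to apply interior elliptic regularity. Because biharmonicity is invariant under the dilation group generated by $r\partial_r = \partial_t$, differentiating \eqref{eqn:EL} in $t$ produces a homogeneous linear fourth-order elliptic system for $v$, schematically of the form
\begin{equation*}
\triangle^2 v + L(v) = 0,
\end{equation*}
where $L$ is a linear differential operator of order $\leq 3$ whose coefficients are smooth functions of $u$ and of its derivatives up to order $4$ (these enter, for instance, from $\triangle$ applied to $A(u)(\nabla u, \nabla u)$ after differentiation in $u$).

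The two hypotheses combine to give uniform $C^4$ control of $u$ on the cylinder $(t_0-1,t_0+1)\times S^{m-1}$: \eqref{eqn:c3close} controls all $\theta$-derivatives of $u$ up to order $5$, while \eqref{eqn:dtbound} controls every mixed derivative $\partial_t^k\nabla_{S^{m-1}}^j u$ with $k\geq 1$ and $k+j\leq 4$. Consequently the coefficients of the linear operator acting on $v$ are bounded in $L^\infty$ (and in fact as smoothly as we need, after possibly upgrading the regularity of $u$ via Lemma \ref{lem:regularity}). Applying standard interior $L^2$-based estimates for homogeneous fourth-order linear elliptic systems and iterating until Sobolev embedding delivers $C^3$ control on the middle slice, one obtains
\begin{equation*}
\norm{v}_{C^3(\set{t_0}\times S^{m-1})}^2 \leq \tilde{C}\int_{t_0-1}^{t_0+1}\int_{S^{m-1}}\abs{v}^2\,d\theta\,dt.
\end{equation*}
Since $\partial_t^k\nabla_{S^{m-1}}^j u = \partial_t^{k-1}\nabla_{S^{m-1}}^j v$ when $k\geq 1$, and the index constraint $k+j\leq 4$, $k\geq 1$ translates into $(k-1)+j\leq 3$, this $C^3$ bound on $v$ is exactly \eqref{eqn:dtestimate}.

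The main technical obstacle I foresee is the bookkeeping: writing down the linearized equation explicitly enough to check that every coefficient is controlled by the stated hypotheses, and that the coefficients are regular enough (not merely bounded) to iterate the interior elliptic estimate the finite number of times needed before invoking Sobolev embedding into $C^3$. This is not delicate but must be done carefully. If any extra regularity is needed, one may first apply Lemma \ref{lem:regularity} in small balls — justified because $C^5$-closeness to the smooth map $\varphi$ implies the $L^2$ smallness hypothesis of that lemma — to promote $u$ to a fully smooth map with uniform bounds before linearizing.
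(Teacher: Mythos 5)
Your proposal is correct and follows essentially the same route as the paper: by scaling invariance $\partial_t u$ solves the homogeneous linearized elliptic system of \eqref{eqn:EL}, whose coefficients are controlled (in H\"older norm) by \eqref{eqn:c3close} and \eqref{eqn:dtbound}, and interior elliptic estimates then bound the $C^3$ norm of $\partial_t u$ on the middle slice by its $L^2$ norm on the cylinder, which is exactly \eqref{eqn:dtestimate}. The only cosmetic difference is that the paper rescales to $t_0=2$ and states the estimate on annuli $B_{e^{-3/2}}\setminus B_{e^{-5/2}}\subset B_{e^{-1}}\setminus B_{e^{-3}}$, while you phrase it directly on the cylinder and iterate $L^2$ estimates with Sobolev embedding; both are standard and equivalent here.
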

\begin{proof}
	The proof is interior estimate of elliptic system. By scaling invariance of (\ref{eqn:EL}), we may assume that $t_0=2$. Hence to show (\ref{eqn:dtestimate}), it suffices to prove
	\begin{equation*}
		\norm{\partial_r u}_{C^3(B_{e^{-3/2}}\setminus B_{e^{-5/2}})}\leq \tilde{C} \norm{\partial_r u}_{L^2(B_{e^{-1}}\setminus B_{e^{-3}})}.
	\end{equation*}
	The observation is that if we compute the homogeneous elliptic system of $r\partial_r u$ mentioned above, the H\"older norm of all coefficients are bounded due to (\ref{eqn:c3close}) and (\ref{eqn:dtbound}).
\end{proof}

\section{The Lojasiewicz-Simon inequality}\label{sec:LS}
The main purpose of this section is to prove the Lojasiewicz-Simon inequality for $F$ defined by \eqref{eqn:FF},
$$F(\psi)=\int_{S^{m-1}} |\triangle_{S^{m-1}}\psi|^2+(2m-8)|\nabla_{S^{m-1}}\psi|^2d\theta.$$
\begin{lem}
	\label{lem:loja}
	Let $\varphi$ be a smooth critical point of $F(\psi)$. Then there are $\varepsilon>0$, $\alpha\in (0,1]$ and $C>0$ depending on $\varphi$ such that for all $\psi:S^{m-1}\to N$ with
	\begin{equation*}
		\norm{\psi-\varphi}_{C^{5}(S^{m-1})}\leq \varepsilon,	
	\end{equation*}
	we have
	\begin{equation}
		\abs{F(\psi)-F(\varphi)}^{1-\alpha/2}\leq C \norm{\mathcal M_F(\psi)}_{L^2(S^{m-1})},
		\label{eqn:loja}
	\end{equation}
where $\mathcal M_F(\psi)$ is the Euler-Lagrange operator of $F$.
\end{lem}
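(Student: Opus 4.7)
The strategy is to follow Simon's Lyapunov--Schmidt reduction from Chapter 3 of \cite{simon1996theorems} essentially verbatim, the only change being that the linearization of $\mathcal{M}_F$ at $\varphi$ is a fourth-order (rather than second-order) elliptic operator on $S^{m-1}$. First I would parametrize a $C^5$-neighborhood of $\varphi$ in the space of maps $S^{m-1}\to N$ by $\psi_\xi := \Pi_N(\varphi+\xi)$, where $\Pi_N$ is the nearest-point projection onto $N$ in a tubular neighborhood and $\xi\in C^5(S^{m-1},\Real^p)$ is small. Since $N$ is analytic, $\Pi_N$ is real-analytic, so $\xi\mapsto F(\psi_\xi)$ is a real-analytic function on a $C^5$-neighborhood of $0$; its $L^2$-gradient is $\mathcal{M}_F(\psi_\xi)$ (composed with the transpose of $D\Pi_N$, which is the identity at $\xi=0$), and its Hessian at $\xi=0$ is the fourth-order elliptic operator $L := D\mathcal{M}_F(\varphi)$, self-adjoint with smooth coefficients on sections of $\varphi^*TN$. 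Standard elliptic theory on the closed manifold $S^{m-1}$ gives a finite-dimensional $L^2$-kernel $K\subset C^\infty$ and a bounded inverse $L^{-1}:K^\perp\cap L^2\to K^\perp\cap H^4$.

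Next comes the Lyapunov--Schmidt reduction. Let $\Pi_K$ and $\Pi_{K^\perp}$ be the $L^2$-orthogonal projections and split $\xi = \zeta+\eta$ with $\zeta\in K$ and $\eta\in K^\perp$. The equation $\Pi_{K^\perp}\mathcal{M}_F(\psi_{\zeta+\eta})=0$ has linearization $L|_{K^\perp}$ at the origin, so the analytic implicit function theorem produces a unique analytic solution $\eta=\eta(\zeta)$ near $0\in K$ with $\eta(0)=0$. The reduced functional
\begin{equation*}
f(\zeta) := F(\psi_{\zeta+\eta(\zeta)})
\end{equation*}
is then real-analytic on the finite-dimensional space $K$ near $0$, and the classical finite-dimensional Lojasiewicz inequality provides $\alpha\in(0,1]$ and $C>0$ with $|f(\zeta)-F(\varphi)|^{1-\alpha/2}\leq C|\nabla f(\zeta)|$. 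A direct computation using the defining equation for $\eta(\zeta)$ shows that $\nabla f(\zeta) = \Pi_K \mathcal{M}_F(\psi_{\zeta+\eta(\zeta)})$.

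Finally I would transfer this back to an arbitrary $\psi$ with $\|\psi-\varphi\|_{C^5}\leq\varepsilon$. Writing $\psi=\psi_\xi$ with $\xi=\zeta+\eta$ and using the invertibility of $L|_{K^\perp}$, one obtains $\|\eta-\eta(\zeta)\|_{H^4}\leq C\|\Pi_{K^\perp}\mathcal{M}_F(\psi)\|_{L^2}$. A Taylor expansion of $F$ around $\psi_{\zeta+\eta(\zeta)}$, which is critical in the $K^\perp$ direction, then gives $|F(\psi)-f(\zeta)|\leq C\|\mathcal{M}_F(\psi)\|_{L^2}^2$, and the same invertibility estimate lets one replace the base point in $\nabla f(\zeta)$ by $\psi$ at the cost of an $\|\mathcal{M}_F(\psi)\|_{L^2}$-error, giving $|\nabla f(\zeta)|\leq C\|\mathcal{M}_F(\psi)\|_{L^2}$. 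Combining these three bounds with the finite-dimensional Lojasiewicz inequality and $1-\alpha/2<1$ yields \eqref{eqn:loja}. The one point that genuinely uses the fourth-order nature of the problem is the elliptic theory for $L$ on $S^{m-1}$, which is entirely standard; the main technical book-keeping I expect to worry about is verifying the real-analyticity of $\xi\mapsto F(\psi_\xi)$ as a map on $C^5(S^{m-1},\Real^p)$, which follows from the composition rules for analytic maps between Banach spaces recorded in \cite{simon1996theorems}.
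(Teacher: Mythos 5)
Your proposal is correct and follows essentially the same route as the paper: identify maps near $\varphi$ with small sections of $\varphi^*TN$ via the nearest-point projection, perform a Lyapunov--Schmidt reduction using the ellipticity and self-adjointness of the fourth-order linearization, apply the classical Lojasiewicz inequality to the analytic reduced function on the finite-dimensional kernel, and transfer the estimate back using quantitative invertibility ($L^2\to W^{4,2}$) bounds, with analyticity ultimately resting on the analyticity of $N$ and $\Pi_N$ (the paper verifies this via complexification in its appendix). The only cosmetic difference is that you solve $\Pi_{K^\perp}\mathcal M_F=0$ for $\eta(\zeta)$ by the implicit function theorem, whereas the paper inverts $\mathcal N(u)=P_K u+\mathcal M_{\tilde F}(u)$ as in Simon's book; these are equivalent formulations of the same reduction.
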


\subsection{An equivalent form}
Since $\varphi$ is smooth, there is a natural correspondence between the maps that are close to $\varphi$ (in $C^5$ topology) and the small (in $C^5$ norm) sections of the pull-back bundle $V:=\varphi^* TN$. More precisely, we embed $N$ isometrically as a submanifold in $\Real^p$ and identify  a section $u$ of $\varphi^*TN$ with a map
\begin{equation*}
	u:S^{m-1}\to \Real^p, \qquad \mbox{satisfying} \quad u(\omega)\in T_{\varphi(\omega) }N\subset \Real^p.
\end{equation*}
Via the nearest point projection $\Pi$ defined in a tubular neighborhood of $N$, for any $\psi$ close to $\varphi$, we define $u$ by
\begin{equation}\label{eqn:upsi}
	\psi(\omega)= \Pi(\varphi(\omega)+ u(\omega)).
\end{equation}
This is well defined because for each $\omega$, $\Pi$ is a diffeomorphism between a neighborhood of $\varphi(\omega)$ in $N$ and a neighborhood of $0$ in $T_{\varphi(\omega)}N\subset \Real^p$.

Since $\varphi$ is a fixed smooth map, the $C^{k,\beta}$ norm of $u$ as a section of $V$ defined by the induced pull-back connection is equivalent to the $C^{k,\beta}$ norm of $u$ as a map from $S^{m-1}$ to $\Real^p$ (with restrictions to the image). The same applies to the Sobolev norms as well. While the intrinsic role of $u$ as a section is enough for the argument (the Lyapunov-Schmidt reduction), the extrinsic role of $u$ as a map is important in using the analyticity assumption. (See Appendix \ref{sec:analytic}.)

With the above identification in mind, define
\begin{equation*}
	\tilde{F}(u)= F(\psi)-F(\varphi)
\end{equation*}
 Then $\tilde{F}(0)=0$ and $u=0$ is a critical point of $\tilde{F}$. Let $\mathcal M_{\tilde{F}}(u)$ be the Euler-Lagrange operator of $\tilde{F}$ at $u$. Since the $L^2$ inner product of $V$ that we use to compute $\mathcal M_{\tilde{F}}$ is not identical to the $L^2$ inner product used for the computation of $\mathcal M_F$, $\mathcal M_F(\psi)$ is not trivially the same as $\mathcal M_{\tilde{F}}(u)$ with $u$ and $\psi$ related by \eqref{eqn:upsi}. However, we have
\begin{lem} \label{equivalenceofoldandnew}
	Let $\tilde{F}$ be defined as above, if $\psi$ is a map from $S^{m-1}$ to $N$ with $\|\psi-\varphi\|_{C^{4,\beta}}<\delta$ for sufficiently small $\delta>0$ and $u$ is defined by \eqref{eqn:upsi}, then
\begin{equation}\label{equ:equivalenceofoldandnew}
  (1-C\delta)|\mathcal{M}_{F}(\psi)|\leq |\mathcal{M}_{\tilde{F}}(u)|\leq |\mathcal{M}_{F}(\psi)|.
  \end{equation}
\end{lem}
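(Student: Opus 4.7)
The plan is to derive the pointwise identity $\mathcal{M}_{\tilde{F}}(u)(\omega) = L(\omega)^{*}\mathcal{M}_{F}(\psi)(\omega)$ for a pointwise linear map $L(\omega)$ that is close to the identity when $\delta$ is small, and then deduce \eqref{equ:equivalenceofoldandnew} from pointwise linear algebra.

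First I would differentiate the relation $\tilde{F}(u)=F(\Pi(\varphi+u))$ along a variation $v$ of $u$ (a section of $\varphi^{*}TN$). Setting $u_{\epsilon}=u+\epsilon v$ and $\psi_{\epsilon}=\Pi(\varphi+u_{\epsilon})$, the chain rule for $\Pi$ gives $\psi_{\epsilon}(\omega)=\psi(\omega)+\epsilon L(\omega)v(\omega)+O(\epsilon^{2})$, where
$$L(\omega) := d\Pi_{\varphi(\omega)+u(\omega)}\big|_{T_{\varphi(\omega)}N}\colon T_{\varphi(\omega)}N \longrightarrow T_{\psi(\omega)}N.$$
Then $d\tilde{F}_{u}(v)=dF_{\psi}(Lv)$. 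Reading the definitions of the two Euler--Lagrange operators against the $L^{2}$ pairings on $\varphi^{*}TN$ and $\psi^{*}TN$ (both induced from the ambient $\R^{p}$ inner product and the sphere's volume), I obtain the pointwise identity $\mathcal{M}_{\tilde{F}}(u)(\omega)=L(\omega)^{*}\mathcal{M}_{F}(\psi)(\omega)$, where the adjoint is taken between the two tangent spaces with their $\R^{p}$-inherited inner products. Note that no derivatives of $v$ enter because $L$ acts \emph{pointwise}, so the 4th-order nature of $\mathcal{M}_{F}$ and $\mathcal{M}_{\tilde{F}}$ never obstructs the bookkeeping.

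Second I would analyze $L(\omega)$. At $u=0$ we have $\psi=\varphi$ and $d\Pi_{\varphi(\omega)}=P(\varphi(\omega))$ equals the orthogonal projection $\R^{p}\to T_{\varphi(\omega)}N$, so $L(\omega)$ restricts to the identity on $T_{\varphi(\omega)}N$. Since $\Pi$ is smooth in a tubular neighbourhood of $N$, Taylor expansion in $u$ yields $L(\omega)=\mathrm{id}+O(|u(\omega)|)$ as a linear map (and likewise for $L(\omega)^{*}$). The key structural point needed for the sharp upper bound in \eqref{equ:equivalenceofoldandnew} is that $L(\omega)^{*}$ factors as the ambient inclusion $T_{\psi(\omega)}N\hookrightarrow \R^{p}$ followed by the orthogonal projection onto $T_{\varphi(\omega)}N$ (exactly at $u=0$, and up to an $O(|u|)$ correction in general); hence $\|L(\omega)^{*}\|_{\mathrm{op}}\le 1+C|u(\omega)|$, and at $u=0$ we even have $|L^{*}w|\le |w|$.

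Third, the hypothesis $\|\psi-\varphi\|_{C^{4,\beta}}<\delta$ gives in particular $\|\psi-\varphi\|_{C^{0}}<\delta$, and Lipschitz invertibility of $\Pi$ near $N$ then gives $\|u\|_{C^{0}}\le C\delta$. Plugging this bound into the operator estimate on $L^{*}$ and the pointwise identity above yields both inequalities in \eqref{equ:equivalenceofoldandnew}. The main obstacle I expect is the careful identification of adjoints: $\mathcal{M}_{F}(\psi)$ lives in $\psi^{*}TN$ while $\mathcal{M}_{\tilde{F}}(u)$ lives in $\varphi^{*}TN$, so keeping track of which inner product defines which operator, and verifying that the sharp (no-constant) upper bound follows from the projection-like structure of $L^{*}$ rather than from a generic perturbation estimate, is the one delicate ingredient.
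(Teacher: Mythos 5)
Your proposal is essentially the paper's own argument: differentiating $\tilde{F}(u)=F(\Pi(\varphi+u))$ by the chain rule and identifying $D\Pi_{\varphi+u}$ with the orthogonal projection $P_\psi$, the paper obtains in Appendix \ref{sec:analytic} the identity $\mathcal M_{\tilde{F}}(u)=P_\varphi\,\mathcal M_F(\psi)$ (see \eqref{eqn:MM}), so your pointwise map $L(\omega)^*$ is exactly $P_\varphi$ restricted to $T_{\psi(\omega)}N$. With that identification the delicate point you flag is immediate: the upper bound with constant $1$ is the norm-nonincreasing property of the orthogonal projection $P_\varphi$, and the lower bound follows, as you say, from $T_\psi N$ being $O(\delta)$-close to $T_\varphi N$.
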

The proof follows trivially from the equation (whose derivation is given in Appendix \ref{sec:analytic}, see \eqref{eqn:MM} there)
\begin{equation*}
	\mathcal M_{\tilde{F}}(u)=P_\varphi \mathcal M_F(\psi)	
\end{equation*}
and the fact that the tangent space $T_\psi N$ is close to $T_\varphi N$ since $\varphi$ is close to $\psi$.

Given Lemma \ref{equivalenceofoldandnew}, Lemma \ref{lem:loja} is reduced to
\begin{lem}
	\label{lem:loja2}
	There are $\varepsilon>0$, $\alpha\in (0,1]$ and $C>0$ depending on $\varphi$ such that for all $u\in C^5(V)$ with
	\begin{equation*}
		\norm{u}_{C^5(V)} \leq \varepsilon,	
	\end{equation*}
	we have
	\begin{equation}
		\abs{\tilde{F}(u)}^{1-\alpha/2}\leq C \norm{\mathcal M_{\tilde{F}}(u)}_{L^2(V)}.
		\label{eqn:loja2}
	\end{equation}
\end{lem}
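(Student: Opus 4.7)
The plan is to follow Simon's Lyapunov-Schmidt reduction strategy from \cite{simon1996theorems}, adapted to the fourth-order functional $\tilde{F}$. First I would compute the linearization of the Euler-Lagrange operator at $u=0$: since $F(\psi) = \int_{S^{m-1}} \abs{\triangle_{S^{m-1}}\psi}^2 + (2m-8) \abs{\nabla_{S^{m-1}} \psi}^2 d\theta$, the linear operator $L := D \mathcal M_{\tilde{F}}(0)$ has leading part $\triangle_{S^{m-1}}^2 - (2m-8) \triangle_{S^{m-1}}$ composed with projection onto $T_\varphi N$, plus lower-order coefficients depending on $\varphi$. Because $S^{m-1}$ is compact and $L$ is formally self-adjoint and elliptic of order four, it has finite-dimensional kernel $K \subset C^\infty(V)$, and (using $L^2$-orthogonal projections $\Pi_K$, $\Pi_{K^\perp}$ throughout) $L$ induces an isomorphism from $K^\perp \cap H^4(V)$ onto $K^\perp \cap L^2(V)$ by standard elliptic theory.

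Next, perform a Lyapunov-Schmidt reduction. Writing $u = v + w$ with $v = \Pi_K u$ and $w \in K^\perp$, I apply the analytic implicit function theorem in $C^{4,\beta}$-based spaces to solve $\Pi_{K^\perp} \mathcal M_{\tilde{F}}(v + w) = 0$ for $w = w(v)$. The derivative at $(v,w)=(0,0)$ in the $w$-variable is $L|_{K^\perp}$, an isomorphism of $K^\perp\cap C^{4,\beta}(V)$ onto $K^\perp\cap C^{0,\beta}(V)$, and analyticity of $u \mapsto \mathcal M_{\tilde{F}}(u)$ as a map between these spaces rests on the analyticity of the nearest-point projection $\Pi$ (and hence of $N$), which is the content of Appendix \ref{sec:analytic}. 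This produces an analytic map $w: K \to K^\perp$ with $w(0) = 0$. Define the real analytic finite-dimensional function $f(v) := \tilde{F}(v + w(v))$; a short computation using $w'(v) \in K^\perp$ shows $\nabla f(v) = \Pi_K \mathcal M_{\tilde{F}}(v + w(v))$.

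The classical Lojasiewicz inequality applied to the analytic function $f$ on the finite-dimensional space $K$ then yields $\alpha \in (0,1]$ and $C > 0$ with $\abs{f(v)}^{1-\alpha/2} \leq C\abs{\nabla f(v)}$ for $v$ near $0$. To transfer this to \eqref{eqn:loja2}, I compare a small $u$ with the reduced section $v + w(v)$, where $v = \Pi_K u$. Taylor-expanding $\mathcal M_{\tilde{F}}$ around $v + w(v)$, using $\Pi_{K^\perp} \mathcal M_{\tilde{F}}(v + w(v)) = 0$ together with the invertibility of $L|_{K^\perp}$, gives
\begin{equation*}
\norm{w - w(v)}_{H^4(V)} \leq C \norm{\mathcal M_{\tilde{F}}(u)}_{L^2(V)}.
\end{equation*}
Because $w - w(v) \in K^\perp$, the first-order term in the Taylor expansion of $\tilde{F}$ around $v + w(v)$ vanishes, yielding $\abs{\tilde{F}(u) - f(v)} \leq C \norm{w - w(v)}_{H^4}^2$, while a parallel expansion for $\mathcal M_{\tilde{F}}$ gives $\abs{\nabla f(v) - \Pi_K \mathcal M_{\tilde{F}}(u)} \leq C \norm{w - w(v)}_{H^4}$. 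Plugging these into the finite-dimensional Lojasiewicz inequality and using $2-\alpha > 1$ together with the smallness of $\norm{\mathcal M_{\tilde{F}}(u)}_{L^2}$ produces \eqref{eqn:loja2}.

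The main technical obstacle is setting up the analytic framework so that the implicit function theorem applies in an analytic category and delivers an analytic reduction $w(v)$, hence an analytic $f$. The fourth-order elliptic theory for $L$ (Fredholmness, invertibility on $K^\perp$, $L^2$- and $C^{k,\beta}$-regularity) is otherwise standard because $L$ has smooth coefficients on the compact manifold $S^{m-1}$. In sum, the argument is a direct translation of Simon's second-order treatment in \cite{simon1996theorems}, with $H^2/C^{2,\beta}$ replaced by $H^4/C^{4,\beta}$; this is why Lemma \ref{lem:loja2} requires a $C^5$ (rather than $C^3$) neighborhood of $\varphi$.
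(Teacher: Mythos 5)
Your proposal is correct and follows the same Lyapunov--Schmidt plus Lojasiewicz strategy as the paper's Section \ref{sec:reduction}; the only substantive difference is in how the reduced critical set is parametrized. You solve $(I-P_K)\mathcal{M}_{\tilde{F}}(v+w)=0$ for $w=w(v)$ via the analytic implicit function theorem and set $f(v)=\tilde{F}(v+w(v))$, which yields the \emph{exact} identity $\nabla f(v)=P_K\mathcal{M}_{\tilde{F}}(v+w(v))$, because $Dw(v)$ takes values in $K^\perp$ while $\mathcal{M}_{\tilde{F}}(v+w(v))\in K$, so the cross term vanishes. The paper instead inverts the modified operator $\mathcal{N}=P_K+\mathcal{M}_{\tilde{F}}$ to obtain $\Psi=\mathcal{N}^{-1}$ and sets $f(\xi)=\tilde{F}(\Psi(\sum\xi^j\varphi_j))$; since $\Psi$ does not preserve the $K$-component, the paper cannot write an exact gradient formula and must instead prove the two-sided comparison \eqref{eqn:TWO} by estimating $d\Psi-d\Psi|_0$. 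Your remaining ingredients match the paper's one-for-one: the $H^4$-bound on $w-w(v)$ by $\norm{\mathcal{M}_{\tilde{F}}(u)}_{L^2}$ and the quadratic bound on $\tilde{F}(u)-f(v)$ are precisely the content of Lemma \ref{lem:facts} via Lemma \ref{Lemma1}, and the analyticity input you defer is what Appendix \ref{sec:analytic} supplies (complexification of $\mathcal{M}_{\tilde{F}}$, ellipticity and self-adjointness of the complexified linearization, analytic inverse/implicit function theorem between Banach spaces). In short, this is an equivalent and arguably slightly cleaner reformulation of the paper's argument, not a gap.
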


\subsection{The Lyapunov-Schmidt reduction}\label{sec:reduction}
The proof of Lemma \ref{lem:loja2} is an application of the Lyapunov-Schmidt reduction argument. The local behavior of $\tilde{F}$ near $u=0$ is related to an analytic function defined on the finite dimensional kernel of an elliptic operator. More precisely, let $\mathcal L_{\tilde{F}}$ be the linearization of $\mathcal M_{\tilde{F}}$ at $u=0$, which is an elliptic operator from $C^{4,\beta}(V)$ to $C^{0,\beta}(V)$. By the theory of elliptic operators, the kernel of $\mathcal L_{\tilde{F}}$ is a finite dimensional space, denoted by $K$. Let $P_K$ be the orthogonal projection of $L^2(V)$ onto $K$.

Setting
\begin{equation*}
	\mathcal N(u)=P_K u + \mathcal M_{\tilde{F}}(u),
\end{equation*}
we find that $\mathcal N(0)=0$ and the linearization of $\mathcal N$ at $u=0$ is given by
\begin{equation*}
	P_K + \mathcal L_{\tilde{F}},
\end{equation*}
which is an isomorphism between $C^{4,\beta}(V)$ onto $C^{0,\beta}(V)$ because it is self-adjoint with trivial kernel. The inverse function theorem then gives an inverse $\Psi=\mathcal N^{-1}$ from a neighborhood of $0\in C^{0,\beta}(V)$ to $C^{4,\beta}(V)$.
\begin{rem}
	(1) For the ellipticity and self-adjointness of $\mathcal L_{\tilde{F}}$, we refer to Section \ref{subsec:properties}.	

	(2) The inverse function here actually appears as the real part of a complexified inverse function, which we need to justify the analyticity of $f$ in \eqref{eqn:f} below.
\end{rem}

Moreover, we have the following estimate for $\Psi$,
\begin{lem}\label{Lemma1}($L^{2}$ estimate)
 There is a neighborhood $W$ of $0$ in $C^{0,\beta}(V)$ and a constant $C$, depending only on $\tilde{F}$, such that
 $$ \|\Psi(f_{1})-\Psi(f_{2})\|_{W^{4,2}(V)}\leq C\|f_{1}-f_{2}\|_{L^{2}(V)},\qquad \text{for any} \quad f_1,\,f_2\in W.$$
\end{lem}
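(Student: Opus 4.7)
The plan is to convert the identity $\mathcal N(u_i)=f_i$ into an elliptic equation for $u_1-u_2$ and derive an $L^2$-to-$W^{4,2}$ Lipschitz bound for $\Psi$ by a perturbation argument around the linearization $\mathcal L_0 := P_K + \mathcal L_{\tilde F}$. Setting $\mathcal R(u) := \mathcal M_{\tilde F}(u) - \mathcal L_{\tilde F}(u)$, so that $\mathcal R(0)=0$ and $D\mathcal R(0)=0$, subtracting the defining equations $\mathcal N(u_i) = f_i$ gives
\begin{equation*}
\mathcal L_0(u_1-u_2) = (f_1-f_2) - \bigl(\mathcal R(u_1) - \mathcal R(u_2)\bigr).
\end{equation*}

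First, I would verify that $\mathcal L_0 : W^{4,2}(V) \to L^2(V)$ is an isomorphism. Since $\mathcal L_{\tilde F}$ is self-adjoint and elliptic (Section \ref{subsec:properties}) and $P_K$ is the $L^2$-orthogonal projection onto $\ker \mathcal L_{\tilde F}$, the operator $\mathcal L_0$ is self-adjoint and elliptic on the closed manifold $S^{m-1}$ with trivial kernel. Fredholm theory on Sobolev spaces then promotes it to an isomorphism, yielding the a priori estimate
\begin{equation*}
\norm{v}_{W^{4,2}(V)} \leq C \norm{\mathcal L_0 v}_{L^2(V)} \qquad \text{for all } v \in W^{4,2}(V).
\end{equation*}

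The main technical step is the remainder bound. Writing
\begin{equation*}
\mathcal R(u_1) - \mathcal R(u_2) = \int_0^1 D\mathcal R\bigl(s u_1 + (1-s) u_2\bigr) (u_1-u_2)\, ds,
\end{equation*}
and observing that $D\mathcal R(w)$ is a linear fourth-order differential operator whose coefficients are smooth functions of $(w,\nabla w,\ldots,\nabla^4 w)$ that vanish identically at $w=0$, one obtains $\norm{D\mathcal R(w) v}_{L^2(V)} \leq C \norm{w}_{C^{4,\beta}(V)} \norm{v}_{W^{4,2}(V)}$ whenever $\norm{w}_{C^{4,\beta}}$ is small. Integrating in $s$ yields
\begin{equation*}
\norm{\mathcal R(u_1) - \mathcal R(u_2)}_{L^2(V)} \leq C\bigl(\norm{u_1}_{C^{4,\beta}} + \norm{u_2}_{C^{4,\beta}}\bigr)\norm{u_1-u_2}_{W^{4,2}(V)}.
\end{equation*}

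To close the argument, I use that the inverse function theorem already established in the H\"older setting gives $\norm{\Psi(f)}_{C^{4,\beta}} \to 0$ as $\norm{f}_{C^{0,\beta}}\to 0$; by shrinking $W$ I may assume $\norm{u_i}_{C^{4,\beta}} \leq \eta$ for any preassigned $\eta>0$. Combining the three displays and choosing $\eta$ so that the resulting coefficient in front of $\norm{u_1-u_2}_{W^{4,2}}$ is at most $1/2$, I absorb the remainder term into the left-hand side to conclude. The principal obstacle is the mixed-norm remainder estimate, and it relies crucially on the fact that $\mathcal L_{\tilde F}$ captures the full top-order linear part of $\mathcal M_{\tilde F}$ at $u=0$, so that the coefficients of $D\mathcal R(w)$ genuinely vanish as $w \to 0$ in $C^{4,\beta}$.
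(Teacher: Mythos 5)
Your argument is correct and is essentially the paper's proof in a dual formulation: the paper integrates $D\Psi$ along a segment and bounds $D\mathcal N|_u$ from below by perturbing the elliptic estimate for $D\mathcal N|_0 = \mathcal L_0$, while you subtract the equations $\mathcal N(u_i)=f_i$ and absorb the remainder $\mathcal R(u_1)-\mathcal R(u_2)$, but both hinge on exactly the same two ingredients, namely the $W^{4,2}$-to-$L^2$ isomorphism for $\mathcal L_0 = P_K + \mathcal L_{\tilde F}$ and the smallness in $C^{4,\beta}$ of the difference $D\mathcal M_{\tilde F}|_u - \mathcal L_{\tilde F}$. The coefficient-vanishing claim you assert for $D\mathcal R(w)$ is precisely what the paper verifies by the explicit computation of $D\mathcal M_{\tilde F}|_u h - D\mathcal M_{\tilde F}|_0 h$ in terms of $P_\psi$ and $\|\psi - \varphi\|_{C^{4,\beta}} \leq C\|u\|_{C^{4,\beta}}$.
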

We refer to Appendix \ref{sec:Lemma1} for the proof.

With the help of $\Psi$, we define
\begin{equation}\label{eqn:f}
	f(\xi)= \tilde{F}(\Psi( \sum_{j=1}^l \xi^j \varphi_j))
\end{equation}
for $\abs{\xi}$ small, where $l=\dim K$ and $\set{\varphi_j}$ is a basis of $K$ with respect to the $L^2$ inner product.

The key to the proof of Lemma \ref{lem:loja2} and hence Lemma \ref{lem:loja} is the fact that $f$ is real analytic in a neighborhood of $0$. The proof relies on an analytic version of inverse function theorem for maps between complex Banach spaces and finally depends on the assumption about the analyticity of $N$ in Theorem \ref{thm:main}. It takes some efforts to be precise in tracing the use of this assumption and the details of this argument is given in Appendix \ref{sec:analytic}.

For now, we take the analyticity of $f$ near $0$ for granted. Therefore, it follows from the classical Lojasiewicz inequality that there are constants $\alpha\in (0,1]$, $C$ and $\sigma>0$ such that
\begin{equation}\label{eqn:lojaf}
	\abs{f(\xi)}^{(1-\frac{\alpha}{2})} \leq C \abs{\nabla f(\xi)},\qquad \text{for}\, \xi\in B_\sigma(0).
\end{equation}

For the proof of Lemma \ref{lem:loja2}, we need:
\begin{lem}
	\label{lem:facts} When $\norm{u}_{C^{4,\beta}(V)}$ is sufficiently small and hence $\xi^j=(u,\varphi_j)_{L^2}$ is small, we have
	\begin{equation}\label{eqn:ONE}
		\abs{\tilde{F}(u)-f(\xi)} \leq C \norm{\mathcal M_{\tilde{F}}(u)}_{L^2}^2;
	\end{equation}
	and
	\begin{equation}\label{eqn:TWO}
		\frac{1}{2} \abs{\nabla f}(\xi)\leq \norm{\mathcal M_{\tilde{F}}(\Psi(\sum_{j=1}^l \xi^j \varphi_j)}_{L^2} \leq 2 \abs{\nabla f}(\xi).
	\end{equation}
\end{lem}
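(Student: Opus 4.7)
Set $v := \Psi\bigl(\sum_{j=1}^{l}\xi^{j}\varphi_{j}\bigr) = \Psi(P_K u)$, so $f(\xi) = \tilde F(v)$ by definition. The defining identity $\mathcal N(v) = P_K u$ reads
\begin{equation*}
  P_K v + \mathcal M_{\tilde F}(v) = P_K u.
\end{equation*}
Since the right-hand side and $P_K v$ both lie in the finite-dimensional space $K$, this forces the crucial algebraic fact
\begin{equation*}
  \mathcal M_{\tilde F}(v) \in K, \qquad \mathcal M_{\tilde F}(v) = P_K u - P_K v.
\end{equation*}
Moreover, because $\mathcal N(u) - \mathcal N(v) = \mathcal M_{\tilde F}(u)$, Lemma \ref{Lemma1} applied to $\Psi = \mathcal N^{-1}$ yields $\|u-v\|_{W^{4,2}(V)} \le C\,\|\mathcal M_{\tilde F}(u)\|_{L^{2}}$. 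This is the common input for both inequalities.

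\textbf{Part (1).} I would Taylor-expand $\tilde F$ along the segment from $v$ to $u$:
\begin{equation*}
  \tilde F(u) - f(\xi) = \langle \mathcal M_{\tilde F}(v),\, u-v\rangle_{L^{2}} + \int_{0}^{1}\bigl\langle \mathcal M_{\tilde F}(v+s(u-v)) - \mathcal M_{\tilde F}(v),\, u-v\bigr\rangle_{L^{2}}\,ds.
\end{equation*}
Using $\mathcal M_{\tilde F}(v)\in K$, the first term equals $\langle \mathcal M_{\tilde F}(v), P_K u - P_K v\rangle = \|\mathcal M_{\tilde F}(v)\|_{L^{2}}^{2}$. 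The Lipschitz continuity of $\mathcal M_{\tilde F}:W^{4,2}\to L^{2}$ gives $\|\mathcal M_{\tilde F}(v)\|_{L^{2}} \le \|\mathcal M_{\tilde F}(u)\|_{L^{2}} + C\|u-v\|_{W^{4,2}} \le C\|\mathcal M_{\tilde F}(u)\|_{L^{2}}$, so the first term is $O(\|\mathcal M_{\tilde F}(u)\|_{L^{2}}^{2})$. The integral term is bounded by $C\|u-v\|_{W^{4,2}}\|u-v\|_{L^{2}}$ using the same Lipschitz estimate, hence also $O(\|\mathcal M_{\tilde F}(u)\|_{L^{2}}^{2})$. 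Combining gives \eqref{eqn:ONE}.

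\textbf{Part (2).} Differentiating $\mathcal N(v(\xi)) = \sum \xi^{j}\varphi_{j}$ in $\xi^{k}$ yields
\begin{equation*}
  \bigl(P_K + \mathcal L_{\tilde F}(v)\bigr)\,\partial_{k}v = \varphi_{k},
\end{equation*}
where $\mathcal L_{\tilde F}(v)$ is the linearization of $\mathcal M_{\tilde F}$ at $v$. At $v=0$ the operator $\mathcal L_{\tilde F}(0)$ vanishes on $K$, so $\partial_{k}v|_{\xi=0} = \varphi_{k}$; by continuity of $(P_K+\mathcal L_{\tilde F}(v))^{-1}$, for $\xi$ small we have $\partial_{k}v = \varphi_{k} + e_{k}$ with $\|e_{k}\|_{L^{2}}$ as small as we wish. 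Therefore
\begin{equation*}
  \partial_{k}f(\xi) = \langle \mathcal M_{\tilde F}(v), \partial_{k}v\rangle_{L^{2}} = \langle \mathcal M_{\tilde F}(v), \varphi_{k}\rangle_{L^{2}} + \langle \mathcal M_{\tilde F}(v), e_{k}\rangle_{L^{2}}.
\end{equation*}
Because $\mathcal M_{\tilde F}(v)\in K$ and $\{\varphi_{j}\}$ is an $L^{2}$-orthonormal basis of $K$, $\sum_{k}|\langle \mathcal M_{\tilde F}(v),\varphi_{k}\rangle|^{2} = \|\mathcal M_{\tilde F}(v)\|_{L^{2}}^{2}$. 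Cauchy--Schwarz on the error term together with the smallness of $\|e_k\|$ gives the two-sided bound \eqref{eqn:TWO} after shrinking the admissible range of $\|u\|_{C^{4,\beta}}$.

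\textbf{Main obstacle.} The one non-routine point is the identity $\mathcal M_{\tilde F}(v)\in K$; everything else is Taylor expansion, the chain rule, and continuity. Once this decoupling from the Lyapunov--Schmidt reduction is cleanly isolated, (1) becomes a second-order Taylor residue computation driven by Lemma \ref{Lemma1}, and (2) becomes an orthonormal-basis expansion perturbed by the fact that $\partial_{k}v$ is $C^{0}$-close to $\varphi_{k}$.
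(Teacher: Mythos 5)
Your proof is correct and follows essentially the same route as the paper: finite Taylor expansion plus Lemma \ref{Lemma1} for part (1), and differentiation of the Lyapunov--Schmidt relation (chain rule through $d\Psi$) plus a smallness-of-$\xi$ perturbation for part (2), both hinging on the orthonormality of $\{\varphi_j\}$ and on $\mathcal M_{\tilde F}(v)\in K$. Your version isolates the algebraic identity $\mathcal M_{\tilde F}(v)=P_K u-P_K v\in K$ up front and uses it to evaluate the first-order term of part (1) exactly, where the paper instead centers at $u$ and just applies Cauchy--Schwarz; this is a cosmetic reorganization, not a different argument, and both are correct.
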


Before the proof of Lemma \ref{lem:facts}, we show how Lemma \ref{lem:loja2} follows from it and \eqref{eqn:lojaf}.

In fact, by plugging \eqref{eqn:ONE} and \eqref{eqn:TWO} directly into \eqref{eqn:lojaf}, we get
\begin{equation}\label{eqn:weget}
	\begin{split}
	\abs{\tilde{F}(u)}^{1-\alpha/2} \leq& C (\norm{\mathcal M_{\tilde{F}}(\Psi(\sum \xi^j \varphi_j))}_{L^2} + \norm{\mathcal M_{\tilde{F}}(u)}_{L^2}^{2-\alpha}) \\
	\leq& C (\norm{\mathcal M_{\tilde{F}}(\Psi(\sum \xi^j \varphi_j))}_{L^2} + \norm{\mathcal M_{\tilde{F}}(u)}_{L^2}).
	\end{split}
\end{equation}
Here in the last line above, we use the facts that $2-\alpha\geq 1$ and that $\norm{\mathcal M_{\tilde{F}}(u)}$ is bounded for $u$ in the lemma. The first term in the right hand side of \eqref{eqn:weget} is dominated by the second, because
\begin{equation}\label{eqn:used}
\begin{split}
	& \norm{\mathcal M_{\tilde{F}}(\Psi(\sum \xi^j \varphi_j)) - \mathcal M_{\tilde{F}}(u)}_{L^2} \\
	\leq& C \norm{\Psi(\sum\xi^j \varphi_j)-u}_{W^{4,2}} \\
	\leq& C \norm{\sum\xi^j \varphi_j- \Psi^{-1} u}_{L^2} \\
	\leq& C \norm{\mathcal M_{\tilde{F}}(u)}_{L^2}.
\end{split}
\end{equation}
Here for the second line above, we noticed that $\mathcal M_{\tilde{F}}$ is a (nonlinear) fourth order differential operator (see \eqref{eqn:MM})and both the $C^{4,\beta}$ norms of $\Psi(\sum \xi^j \varphi_j)$ and $u$ are bounded; for the third line above, we used Lemma \ref{Lemma1}; for the last line, we used the definition of $\mathcal N =\Psi^{-1}$ and $P_K u= \sum \xi^j \varphi_j$.
Now, Lemma \ref{lem:loja2} is a consequence of \eqref{eqn:weget} and \eqref{eqn:used}.

The rest of this section is devoted to the proof of Lemma \ref{lem:facts}.
\subsection{The proof of Lemma \ref{lem:facts}}

By the definition of $f$ (see \eqref{eqn:f}) and $\xi$ (in the assumption of Lemma \ref{lem:facts}), $f(\xi)=\tilde{F}(\Psi(P_K u))$. Hence, to prove \eqref{eqn:ONE}, we compute
       \begin{eqnarray*}
       & &|\tilde{F}(u)-\tilde{F}(\Psi(P_{K}u))|\\
       &=& \left | \int_{0}^{1} \frac{d}{ds}\tilde{F}( u+s(\Psi(P_{K}u)-u))\,ds\right|\\
       &=& \left|\int_{0}^{1}\left(\mathcal{M}_{\tilde{F}}( u+s(\Psi(P_{K}u)-u)) ,\Psi(P_{K}u)-u \right)_{L^{2}} ds \right|.
       \end{eqnarray*}
       Again, by the facts that $\mathcal M_{\tilde{F}}$ is a fourth order operator and that $C^{4,\beta}$ norms of $u$ and $u+s(\Psi(P_K u)-u)$ are bounded for any $s\in [0,1]$, we have
       \begin{equation*}
       \| \mathcal{M}_{\tilde{F}}( u+s(\Psi(P_{K}u)-u))-\mathcal{M}_{\tilde{F}}( u)\|_{L^{2}}\leq C \| \Psi(P_{K}u)-u\|_{W^{4,2}},
       \end{equation*}
which implies that
\begin{eqnarray*}
	&& \abs{\tilde{F}(u)-\tilde{F}(\Psi(P_K u))} \\
	&\leq& C\norm{\Psi(P_K u)-u}_{L^2} \left( \norm{\mathcal M_{\tilde{F}}(u)}_{L^2} + \norm{\Psi(P_K u)-u}_{W^{4,2}} \right) \\
	&\leq& C \norm{\mathcal M_{\tilde{F}}(u)}_{L^2}^2.
\end{eqnarray*}
Here in the last line above, we used
\begin{equation*}
	\norm{\Psi(P_K u)-u}_{W^{4,2}}\leq C \norm{\mathcal M_{\tilde{F}}(u)}_{L^2},
\end{equation*}
which appeared as a part of \eqref{eqn:used} and was proved there. This concludes the proof of \eqref{eqn:ONE}.

For the proof of \eqref{eqn:TWO}, we compute using \eqref{eqn:f} and the chain rule to get
\begin{equation}\label{eqn:nablaf}
	(\eta,\nabla f(\xi))_{\Real^l} = \left( \mathcal M_{\tilde{F}}(\Psi(\sum \xi^j \varphi_j)), d\Psi|_{\sum \xi^j \varphi_j} (\sum \eta^j \varphi_j) \right)_{L^2},
\end{equation}
for some $\eta\in \Real^l$ with $\abs{\eta}=1$.

Notice that $d\Psi|_{\sum \xi^j \varphi_j}$ depends smoothly on $\xi$ in a compact neighborhood of $\xi=0$, hence there is $C>0$ such that
\begin{equation}\label{eqn:psi1}
	\norm{d\Psi|_{\sum \xi^j \varphi_j}-d\Psi|_0} \leq C \abs{\xi}, \quad \text{for small} \, \abs{\xi}.
\end{equation}
\begin{rem}
	(1) For the smooth dependence in $\xi$, we shall prove in the Appendix \ref{sec:analytic} that $\Psi$ has a complexification that is analytic (hence smooth by Theorem \ref{thm:smooth}).

	(2) The norm in \eqref{eqn:psi1} should be the norm of bounded linear operator from $C^\beta(V)$ to $C^{4,\beta}(V)$, according to our discussion in the appendix. What we need here is the inequality
	\begin{equation*}
		\norm{(d\Psi|_{\sum \xi^j \varphi_j}-d\Psi|_0)(\sum \eta^j \varphi_j)}_{L^2} \leq C \abs{\xi} \norm{\sum\eta^j \varphi_j}_{L^2}.
	\end{equation*}
	This is true because $\sum \eta^j\varphi_j$ lies in $K$ and when restricted to the finite dimensional space $K$, $L^2$ norm is equivalent to $C^\beta$ norm.
\end{rem}
On the other hand,
\begin{equation}\label{eqn:psi2}
	d\Psi|_0 (\sum \eta^j \varphi_j)= \sum \eta^j \varphi_j,\qquad \text{for any} \quad \eta\in \Real^l,
\end{equation}
because $d\Psi|_0= (d\mathcal N|_0)^{-1}= (P_K + \mathcal L_{\tilde{F}})^{-1}$, and $\sum \eta^j \varphi_j$ is in $K$, the kernel of $\mathcal L_{\tilde{F}}$.

By \eqref{eqn:psi1} and \eqref{eqn:psi2}, \eqref{eqn:nablaf} implies that
\begin{equation}\label{eqn:compare}
	\abs{ (\eta,\nabla f(\xi))_{\Real^l} - \left( \mathcal M_{\tilde{F}}(\Psi(\sum \xi^j \varphi_j)), \sum \eta^j \varphi_j \right)_{L^2}} \leq C \abs{\xi} \norm{\mathcal M_{\tilde{F}}(\Psi(\sum \xi^j \varphi_j))}_{L^2}.
\end{equation}

Now, in \eqref{eqn:compare}, if we choose $\eta$ parallel to $\nabla f(\xi)$ in $\Real^l$, we obtain
\begin{equation*}
	\abs{\nabla f} \leq (1+C\abs{\xi}) \norm{\mathcal M_{\tilde{F}}(\Psi(\sum \xi^j \varphi_j))}_{L^2};
\end{equation*}
if we choose $\eta$ so that $\sum \eta^j \varphi_j$ is parallel to $\mathcal M_{\tilde{F}}(\Psi(\sum \xi^j \varphi_j))$ (which is in $K$), then we get
\begin{equation*}
	(1-C\abs{\xi}) \norm{\mathcal M_{\tilde{F}}(\Psi(\sum \xi^j \varphi_j))}_{L^2}\leq \abs{\nabla f}.
\end{equation*}
This finishes the proof of \eqref{eqn:TWO} and hence Lemma \ref{lem:facts} if $\xi$ is small.
\section{Dynamics near a critical point of $F$}\label{sec:growth}
Let $u$ be the minimizing biharmonic map given in Theorem \ref{thm:main}. Recall that $(r,\theta)$ is the polar coordinates and that $t=-\log r$. By the assumptions of the theorem, there exists $t_i\to \infty$ such that $u(t_i,\theta)$ as maps on $S^{m-1}$ converges smoothly to a critical point $\varphi$ of $F$. (See the discussion in Section \ref{sec:section}.)

Therefore, for $i$ sufficiently large, $u(t_i,\theta)$ is very close in $C^5$ topology to the critical point $\varphi$ of $F$. Since $u$ is a biharmonic map, the biharmonic map equation determines how $u(t)$ should change as a map on $S^{m-1}$. In this section, we study this dynamics of $u(t)$ in a very small neighborhood of $\varphi$. More precisely, we are interested in the speed of decay of
\begin{equation*}
	\int_t^\infty \int_{S^{m-1}} \abs{\partial_t u}^2 d\theta dt
\end{equation*}
as explained in the introduction.
In fact, we shall control the decay of a larger quantity, namely,
\begin{equation}\label{eqn:defG}
	G(t)= \int_t^\infty \int_{S^{m-1}} (2m-8) \abs{\partial_t^2 u}^2 + (2m-8) \abs{\partial_t \nabla_{S^{m-1}} u}^2 + (2m-8)(m-2) \abs{\partial_t u}^2 d\theta dt.
\end{equation}

\begin{lem}
	\label{lem:dynamics} Suppose $\varphi$ is a smooth critical point of $F$. There is some constant $\sigma>0$ (depending on $\varphi$) such that if $u(t,\theta)$ (cylinder coordinates) is a smooth biharmonic map satisfying
	\begin{equation*}
		\norm{u(s)-\varphi}_{C^5(S^{m-1})}\leq \sigma, \qquad \text{for} \quad s\in [t-3,t+3],
	\end{equation*}
	then there exist $C'>0$ and $\theta\in (0,1)$ such that
	\begin{equation}\label{eqn:discreteode}
		G(s-1)^\theta -G(s+1)^{\theta} \geq C'\left( G(s-1)-G(s+1) \right)^{1/2}.
	\end{equation}
\end{lem}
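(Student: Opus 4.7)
The plan is to derive a key inequality $G(t)^{2-\alpha}\leq C(G(t-1)-G(t+1))$ from the biharmonic equation combined with Lemma~\ref{lem:loja}, and then convert it to \eqref{eqn:discreteode} by a case analysis. In cylindrical coordinates $(t,\theta)$ with $r=e^{-t}$, one has $\triangle_{\R^m}=e^{2t}L_1$ and hence $\triangle_{\R^m}^2=e^{4t}L_2L_1$, where $L_1=\partial_t^2-(m-2)\partial_t+\triangle_{S^{m-1}}$ and $L_2=\partial_t^2+(6-m)\partial_t+(8-2m)+\triangle_{S^{m-1}}$. Since $\triangle_{\R^m}^2 u\perp T_u N$ while $\partial_t u\in T_u N$ pointwise, we have $\langle L_2L_1 u,\partial_t u\rangle_{\R^p}=0$ pointwise. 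Expanding the product, integrating over $S^{m-1}$, and integrating by parts repeatedly in both $\theta$ and $t$, the resulting expression separates into a total $t$-derivative $\tfrac{d}{dt}\bigl[\tfrac12 F(u(t,\cdot))+S(t)\bigr]$ plus a non-derivative residual equal to $(2m-8)\int_{S^{m-1}}[|\partial_t^2 u|^2+|\nabla_{S^{m-1}}\partial_t u|^2+(m-2)|\partial_t u|^2]\,d\theta$; this is exactly the integrand of $-G'(t)$, and the coefficients appearing in the definition of $G$ are chosen precisely to produce this matching. Here $S(t)$ is a polynomial in $L^2$-norms of $\partial_t^k\nabla_{S^{m-1}}^j u(t,\cdot)$ with $k\geq 1$. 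Integrating from $t$ to infinity, using the main theorem's hypothesis together with \eqref{intro:l2} to ensure $G(\infty)=S(\infty)=0$ and $\lim_{t\to\infty}F(u(t,\cdot))=F(\varphi)$, yields the identity $G(t)=\tfrac12(F(u(t,\cdot))-F(\varphi))+S(t)$.

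Next, the pure $\triangle_{S^{m-1}}$ part of $L_2L_1 u$ is $\triangle_{S^{m-1}}^2 u+(8-2m)\triangle_{S^{m-1}} u$, whose $T_u N$-projection is $\mathcal M_F(u(t,\cdot))$. The tangential biharmonic equation $P_{u(t)}(L_2L_1 u)(t,\cdot)=0$ then expresses $\mathcal M_F(u(t,\cdot))$ as $-P_{u(t)}$ applied to an explicit linear combination of $\partial_t^4 u,\partial_t^3 u,\partial_t^2 u,\triangle_{S^{m-1}}\partial_t^2 u,\partial_t u,\triangle_{S^{m-1}}\partial_t u$ at $(t,\cdot)$, every term of which contains at least one $\partial_t$. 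Applying Lemma~\ref{lem:dtestimate} yields
\[\|\mathcal M_F(u(t,\cdot))\|_{L^2(S^{m-1})}^2\leq C\int_{t-1}^{t+1}\int_{S^{m-1}}|\partial_t u|^2\,d\theta\,ds,\]
and the same lemma also gives $|S(t)|\leq C\int_{t-1}^{t+1}\int|\partial_t u|^2$. Choose $\sigma$ no larger than the $\varepsilon$ of Lemma~\ref{lem:loja}, and further small enough that the local quantity $\int_{t-1}^{t+1}\int|\partial_t u|^2$ is itself small — this follows from the Gagliardo-Nirenberg interpolation $\|\partial_t u\|_{L^2}^2\leq C\|u-\varphi\|_{L^2}\|\partial_t^2 u\|_{L^2}\leq C'\sigma$, together with the uniform $C^6$-bound from Lemma~\ref{lem:l2implyc3}. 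Then Lojasiewicz-Simon gives $|F(u(t,\cdot))-F(\varphi)|^{2-\alpha}\leq C\int_{t-1}^{t+1}\int|\partial_t u|^2$, and combined with the identity above this yields
\[G(t)^{2-\alpha}\leq C''\int_{t-1}^{t+1}\int_{S^{m-1}}|\partial_t u|^2\leq C'''(G(t-1)-G(t+1)),\]
the last step using $-G'(s)\geq (2m-8)(m-2)\int_{S^{m-1}}|\partial_t u|^2(s,\cdot)\,d\theta$.

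Finally, set $\theta=\alpha/2\in(0,1/2]$ and split on the decay rate. If $G(s+1)\leq G(s-1)/2$ (fast decay), then $G(s-1)^\theta-G(s+1)^\theta\geq (1-2^{-\theta})G(s-1)^\theta$ while $(G(s-1)-G(s+1))^{1/2}\leq G(s-1)^{1/2}$; since $\theta-1/2\leq 0$ and $G(s-1)$ is uniformly bounded above, \eqref{eqn:discreteode} follows. If $G(s+1)>G(s-1)/2$ (slow decay), then $G(s)\geq G(s-1)/2$, so the key inequality at $s$ gives $G(s-1)^{2-\alpha}\leq 2^{2-\alpha}G(s)^{2-\alpha}\leq \bar C(G(s-1)-G(s+1))$; combining with the mean-value bound $G(s-1)^\theta-G(s+1)^\theta\geq\theta G(s-1)^{\theta-1}(G(s-1)-G(s+1))$ (valid for $\theta\in(0,1)$ since $x^{\theta-1}$ is decreasing) again yields \eqref{eqn:discreteode}. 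The main obstacle is the derivation of the monotonicity identity in the first paragraph: expanding the fourth-order product $L_2L_1$ produces many cross terms, and verifying by careful bookkeeping that the non-derivative-in-$t$ residual assembles precisely into the integrand of $-G'$ — which both justifies and explains the specific coefficients in the definition of $G$ — is the most delicate part of the argument; the rest of the proof is essentially systematic.
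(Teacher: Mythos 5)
Your proposal is correct and follows essentially the same route as the paper: the same cylindrical splitting of $\triangle^2 u$ paired against $\partial_t u$, the same identity $G(t)=\tfrac12\bigl(F(u(t))-F(\varphi)\bigr)+S(t)$ (the paper's \eqref{eqn:temp}, obtained by integrating to the sequence $t_i$), the same use of Lemma \ref{lem:loja} together with Lemma \ref{lem:dtestimate} to reach $G(t)^{2-\alpha}\leq C\bigl(G(t-1)-G(t+1)\bigr)$ (the paper's \eqref{eqn:likesimon}), and the same two-case elementary argument as the paper's Lemma \ref{lem:discrete}. The only quibble is your step forcing $\int_{t-1}^{t+1}\int_{S^{m-1}}|\partial_t u|^2$ to be small: it is unnecessary (boundedness, already supplied by \eqref{eqn:dtbound}, suffices to absorb the $S(t)$-term since $\tfrac{1}{2-\alpha}\leq 1$), and the pointwise-in-$t$ interpolation inequality you quote is not quite right as stated, though this does not affect the argument.
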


Before we start the proof, we rewrite $\triangle^2 u$ in $(t,\theta)$ coordinates and split it into two parts.
Since
\begin{equation*}
	\triangle u = e^{2t}\left( \partial_t^2 - (m-2)\partial_t + \triangle_{S^{m-1}}\right) u,
\end{equation*}
we have
\begin{eqnarray}
	\nonumber
	&& \triangle^2 u\\
	\nonumber
	&=& e^{4t} \left( \partial_t^2 + \triangle_{S^{m-1}} - (m-6) \partial_t + (8-2m) \right) (\partial_t^2 -(m-2)\partial_t + \triangle_{S^{m-1}})u \\
	\nonumber
	&:=& e^{4t} (I_{a}+ I_{b}),
	\label{eqn:one}
\end{eqnarray}
where
\begin{eqnarray*}
	I_a&=&  \partial_t^4 u +  2\partial_t^2 \triangle_{S^{m-1}} u - (2m-8) \partial_t^3 u - (2m-8) \partial_t \triangle_{S^{m-1}} u\\
	&&  + (m^2-10m+20)\partial_t^2 u + (2m-8)(m-2)\partial_t u
\end{eqnarray*}
and
\begin{equation*}
	I_b=\triangle_{S^{m-1}}^2 u + (8-2m) \triangle_{S^{m-1}}u.
\end{equation*}
The idea behind this splitting is that we put everything involving $\partial_t$ in $I_a$ and the rest in $I_b$. An easy observation is that $I_b$ is almost (up to a projection) the gradient of $F$ discussed in Section \ref{sec:section}, namely,
\begin{equation}\label{eqn:dtF}
	\begin{split}
	\partial_t F(u(t)) &= 2 \int_{S^{m-1}} \triangle_{S^{m-1}} u \triangle_{S^{m-1}} \partial_t u + (2m-8) \nabla_{S^{m-1}} u \cdot \nabla_{S^{m-1}} \partial_t u d\theta\\
	&= 2 \int_{S^{m-1}} I_b\cdot \partial_t u d\theta.
	\end{split}
\end{equation}

The way we use the biharmonic map equation has nothing to do with the right hand side of (\ref{eqn:EL}). We multiply the equation by $\partial_t u$ and integrate over $S^{m-1}$, to obtain
\begin{equation}\label{eqn:useequation}
	0=\int_{S^{m-1}} \triangle^2 u \cdot \partial_t u d\theta = \int_{S^{m-1}} (I_a +I_b) \cdot \partial_t u d\theta.
\end{equation}

While $\int_{S^{m-1}} I_b \cdot \partial_t u d\theta$ is known in \eqref{eqn:dtF}, the structure of $\int_{S^{m-1}} I_a\cdot \partial_t u d\theta$ is still complicated. There is some positivity hidden in it. To reveal it, we use the elementary equalities
\begin{equation*}
	\partial_t^4 u \cdot \partial_t u = \partial_t \left( \partial_t^3 u \partial_t u -\frac{1}{2} \abs{\partial_t^2 u}^2 \right)
\end{equation*}
and
\begin{equation*}
	\partial_t^3 u \cdot \partial_t u = \partial_t \left( \partial_t^2 u\cdot \partial_t u \right) - \abs{\partial_t^2 u}^2,
\end{equation*}
to get
\begin{eqnarray*}
	&& \int_{S^{m-1}} I_a\cdot \partial_t u d\theta \\
	&=& \partial_t \left( \int_{S^{m-1}} \partial_t^3 u\partial_t u -\frac{1}{2}\abs{\partial_t^2 u}^2 - \abs{\partial_t \nabla_{S^{m-1}} u}^2 - (2m-8) \partial_t^2 u \partial_t u + \frac{m^2-10m+20}{2} \abs{\partial_t u}^2  d\theta\right)\\
	&& + \left( \int_{S^{m-1}} (2m-8) \abs{\partial_t^2 u}^2 + (2m-8) \abs{\partial_t \nabla_{S^{m-1}} u}^2 + (2m-8)(m-2) \abs{\partial_t u}^2d\theta \right) \\
	&:=& \partial_t \left( \int_{S^{m-1}} II_a d\theta  \right) + \int_{S^{m-1}} II_b d\theta.
\end{eqnarray*}
Notice that $II_b$ is nonnegative and this is how we obtain the definition of $G(t)$ in \eqref{eqn:defG}, i.e. $G(t)=\int_t^\infty \int_{S^{m-1}} II_b d\theta dt$.

By \eqref{eqn:useequation} and \eqref{eqn:dtF}, we have
\begin{equation}\label{eqn:wehave}
	\frac{1}{2}\partial_t F(u(t)) = -\int_{S^{m-1}}I_a\cdot \partial_t u d\theta.
\end{equation}
Let $t_i$ be the sequence mentioned in the beginning of this section such that $u(t_i)$ converges smoothly to the smooth section map $\varphi$. Moreover, $u(t+t_i)$ regarded as a map defined on $[-1,1]\times S^{m-1}$ converges smoothly to $\tilde{\varphi}(t,\theta)=\varphi(\theta)$. This implies that
\begin{equation*}
	\lim_{i\to \infty} \int_{S^{m-1}} II_a(t_i) d\theta =0,
\end{equation*}
so that if we integrate \eqref{eqn:wehave} from $s$ to $t_i$ and take the limit $i\to \infty$, we obtain
\begin{equation}\label{eqn:temp}
	\frac{1}{2}\left( F(\varphi)-F(u(s)) \right) = \int_{S^{m-1}} II_a (s) d\theta - \int_s^{+\infty} \int_{S^{m-1}} II_b d\theta.
\end{equation}
As a by-product of the above computation, $G(t)$ is a finite number, which is the biharmonic counterpart of \eqref{intro:l2}.

We may choose $\sigma$ small so that for $u$ in the lemma and $s\in [t-3,t+3]$, $\norm{u(s)-\varphi}_{C^5(S^{m-1})}$ is small and hence $u(s)$ satisfies the assumption of Lemma \ref{lem:loja}. The Lojasiewicz-Simon inequality (in Lemma \ref{lem:loja}) and \eqref{eqn:temp} imply that
\begin{equation}
	\label{eqn:loj1}
	- \int_{S^{m-1}}II_a(s)d\theta + \int_s^{ +\infty} \int_{S^{m-1}} II_b d\theta dt \leq C \norm{\mathcal M_F(u(s))}_{L^2(S^{m-1})}^{\frac{2}{2-\alpha}}
\end{equation}
for some $\alpha\in (0,1]$.

Next, we show that the right hand side and the first term in the left hand side of \eqref{eqn:loj1} are controlled by $\int_{s-1}^{s+1} \abs{II_b}^2 d\theta$. To see this, recall that by \eqref{eqn:dtF}, $\mathcal M_F(u(s))$ is the projection of $I_b(s)$ onto the tangent bundle of $TN$ at $u(s)$. If we denote this projection from $\Real^p$ onto $T_u N$ by $\Pi$,
\begin{equation}\label{eqn:proj1}
	\mathcal M_F(u(s))= 2 \Pi (I_b(s)).
\end{equation}
On the other hand, since $u$ is extrinsic biharmonic map, the Euler-Lagrange equation reads
\begin{equation}\label{eqn:proj2}
	\Pi (\triangle^2 u) =\Pi(I_a+ I_b)=0.
\end{equation}
Combining \eqref{eqn:proj1} and \eqref{eqn:proj2}, we get
\begin{equation}\label{eqn:hao1}
	\norm{\mathcal M_F(u(s))}_{L^2(S^{m-1})}\leq 2 \norm{I_a(s)}_{L^2(S^{m-1})}.
\end{equation}

Notice that the integrands of both $I_a(s)$ and $II_a(s)$ involve $\partial_t u$ and its derivatives, which are estimated in Section \ref{sec:dtu}. More precisely, by taking $\sigma$ small, we may apply Lemma \ref{lem:l2implyc3} first to get \eqref{eqn:dtbound} and then Lemma \ref{lem:dtestimate} to see
\begin{equation}\label{eqn:hao2}
		\norm{I_a(s)}_{L^2(S^{m-1})}^2 + \int_{S^{m-1}} \abs{II_a}(s) d\theta \leq C \int_{s-1}^{s+1} \int_{S^{m-1}}\abs{II_b}^2 d\theta dt.
\end{equation}

By the definition of $G(t)$ in \eqref{eqn:defG}, \eqref{eqn:loj1}, \eqref{eqn:hao1} and \eqref{eqn:hao2} imply
\begin{equation*}
	-C (G(s-1)-G(s+1))+ G(s) \leq C \left( G(s-1)-G(s+1) \right)^{\frac{1}{2-\alpha}}.
\end{equation*}
Since $G(s-1)-G(s+1)$ is bounded and $\frac{1}{2-\alpha}\leq 1$, the first term can be absorbed into the left hand side. In fact, in the proof that follows, we shall require $G(s)$ to be very small (see the definition of $\eta$ in the next section).
By the monotonicity of $G$, the above inequality is further simplified to
\begin{equation}\label{eqn:likesimon}
	G(s+1) \leq C \left( G(s-1)-G(s+1) \right)^{\frac{1}{2-\alpha}}.
\end{equation}

Here is a lemma similar to (9) in Section 3.15 of  \cite{simon1996theorems}.
\begin{lem}\label{lem:discrete}
	Suppose that $\theta\in (0,1/2]$. If for some positive $C$ and any $a,b\in (0,1)$ satisfying $b<a$,
	\begin{equation}\label{eqn:abC}
		b\leq C (a-b)^{1/(2-2\theta)},
	\end{equation}
	then there is another $C'$ depending only on $C$ and $\theta$ such that
	\begin{equation*}
		a^{\theta}-b^{\theta}\geq C'(a-b)^{1/2}.
	\end{equation*}
\end{lem}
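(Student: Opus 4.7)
The plan is to start by rewriting the hypothesis in a more usable form. Raising both sides of \eqref{eqn:abC} to the power $2-2\theta$ gives
\begin{equation*}
  b^{2-2\theta} \leq C^{2-2\theta}(a-b), \qquad \text{i.e.,} \qquad (a-b)^{1/2} \geq C^{-(1-\theta)} b^{1-\theta}.
\end{equation*}
This is the essential tool: a lower bound on $(a-b)^{1/2}$ in terms of $b^{1-\theta}$, which is exactly what makes the quantity $b^{\theta-1}(a-b)^{1/2}$ bounded below.

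Next, I would split into two cases depending on the size of $b$ relative to $a$.

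\emph{Case 1: $b \leq a/2$.} Here $a - b \geq a/2$, and directly $a^\theta - b^\theta \geq (1-2^{-\theta}) a^\theta$. Since $a \geq a-b$ and $a-b\in (0,1)$, the assumption $\theta \leq 1/2$ ensures $a^\theta \geq (a-b)^\theta \geq (a-b)^{1/2}$, so
$$a^\theta - b^\theta \geq (1-2^{-\theta})(a-b)^{1/2}.$$

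\emph{Case 2: $b > a/2$.} Here I would apply the mean value theorem to $x \mapsto x^\theta$ on $[b,a]$: there is $\xi \in (b,a)$ with $a^\theta - b^\theta = \theta \xi^{\theta-1}(a-b)$. Since $\theta-1<0$ and $\xi < a < 2b$, we get $\xi^{\theta-1} \geq (2b)^{\theta-1} = 2^{\theta-1} b^{\theta-1}$, hence
$$a^\theta - b^\theta \geq \theta \, 2^{\theta-1} b^{\theta-1}(a-b).$$
Now using the rewritten hypothesis $(a-b)^{1/2} \geq C^{-(1-\theta)} b^{1-\theta}$, we multiply to absorb the factor $b^{\theta-1}$:
$$b^{\theta-1}(a-b) = b^{\theta-1}(a-b)^{1/2} \cdot (a-b)^{1/2} \geq C^{-(1-\theta)}(a-b)^{1/2}.$$
Combining, $a^\theta - b^\theta \geq \theta \, 2^{\theta-1} C^{-(1-\theta)}(a-b)^{1/2}$.

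Taking $C' = \min\{1-2^{-\theta},\, \theta \, 2^{\theta-1} C^{-(1-\theta)}\}$ in both cases completes the proof. The argument is entirely elementary; there is no real obstacle once one recognizes that Case 2 is where the hypothesis must be used, and that the right way to use it is the bound $(a-b)^{1/2} \geq C^{-(1-\theta)} b^{1-\theta}$, which precisely cancels the potentially dangerous factor $b^{\theta-1}$ coming from the mean value estimate.
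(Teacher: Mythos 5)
Your proof is correct and follows essentially the same route as the paper: the same case split at $b = a/2$, the same elementary estimate $a^\theta - b^\theta \geq (1-2^{-\theta})a^\theta \geq (a-b)^{1/2}$ in the first case, and the mean value theorem combined with the hypothesis (rewritten as $b^{1-\theta} \leq C^{1-\theta}(a-b)^{1/2}$, equivalently the paper's $a^{1-\theta}\leq (2C)^{1-\theta}(a-b)^{1/2}$) in the second, yielding the same constant $C' = \min\{1-2^{-\theta},\, \theta(2C)^{\theta-1}\}$. No issues.
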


\begin{proof} The proof is an elementary discussion.

Case 1: $b<\frac{1}{2}a$. Noticing that $\theta\leq 1/2$ and $a<1$, we have
\begin{equation*}
	a^\theta-b^\theta\geq (1-\frac{1}{2^\theta}) a^\theta\geq (1-\frac{1}{2^\theta}) a^{1/2} \geq (1-\frac{1}{2^\theta}) (a-b)^{1/2}.
\end{equation*}

Case 2: $b\geq \frac{1}{2}a$. \eqref{eqn:abC} gives
\begin{equation*}
	\frac{a}{2C} \leq (a-b)^{\frac{1}{2(1-\theta)}},
\end{equation*}
which is
\begin{equation}\label{eqn:btheta}
	a^{1-\theta}\leq (2C)^{1-\theta} (a-b)^{1/2}.
\end{equation}
Therefore,
\begin{equation*}
	a^\theta-b^\theta\geq \theta a^{\theta-1}(a-b) \geq \frac{\theta}{(2C)^{1-\theta}}(a-b)^{1/2}.
\end{equation*}
Here in the above line we have used the mean value theorem for the first inequality and \eqref{eqn:btheta} for the second.

In either case, the lemma is proved by taking $C'$ to be $\min\set{1-\frac{1}{2^\theta},\frac{\theta}{(2C)^{1-\theta}}}$.
\end{proof}

\section{A stability argument and the proof of Theorem \ref{thm:main}}\label{sec:proof}
In this section, we prove Theorem \ref{thm:main} by using a routine stability argument. We shall define two neighborhoods of $\varphi$: a larger one (see $\mathcal O_{C^5}(\sigma_1)$ below) in which the results in Section \ref{sec:LS} and Section \ref{sec:growth} hold and a smaller one (see $\mathcal O_{L^2}(\eta)$ below) such that if $u(t_i)$ lies in the smaller neighborhood for sufficiently large $i$, then $u(t)$ will stay in the larger neighborhood forever and converge to the unique limit claimed in Theorem \ref{thm:main}.

We choose $\sigma_1$ so that it is smaller than both the $\varepsilon$ in Lemma \ref{lem:loja} and the $\sigma$ in Lemma \ref{lem:dynamics}. For some $\eta>0$ small (to be determined later), by the definition of $\varphi$ as the section of a tangent map, we can choose (and fix) $t_i$ large such that

1) for all $t\in (t_i-3,t_i+3)$, $u(t)\in \mathcal O_{C^5}(\sigma_1)$;

2) $u(t_i)\in \mathcal O_{L^2}(\eta)$;

3) $G(t_i) \leq \eta^2$, because $G(t)$ is finite and decreases down to zero .

Set
\begin{equation*}
	T=\sup_{t}\left\{ t\,|\, \mbox{for any} \, s\in [t_i,t), \, u(s)\in \mathcal O_{C_5}(\sigma_1) \right\}.
\end{equation*}
By 1) above, we know $T\geq t_i+3$. Now we claim that $T$ is infinity. If otherwise, we want to find a contradiction by showing $u(T)\in \mathcal O_{C^5}(\sigma_1/2)$. Thanks to Lemma \ref{lem:l2implyc3}, there is $\sigma_2>0$ depending on $\sigma_1/2$ such that it suffices to show for any $s\in (t_i,T+2)$, we have $u(s)\in \mathcal O_{L^2}(\sigma_2)$. Let $k$ be the largest integer with $t_i+2k\leq s$. Hence,
\begin{eqnarray*}
	\int_{t_i}^s \norm{\partial_t u}_{L^2(S^{m-1})} &\leq& \sum_{j=1}^k \int_{t_i+2(j-1)}^{t_i+2j} \norm{\partial_t u}_{L^2(S^{m-1})} + \int_{t_i+2k}^s \norm{\partial_t u}_{L^2(S^{m-1})} \\
	&\leq&  C \sum_{j=1}^k \left( \int_{t_i+2(j-1)}^{t_i+2j} \norm{\partial_t u}_{L^2(S^{m-1})}^2 \right)^{1/2} +  C \left( \int_{t_i+2k}^s \norm{\partial_t u}_{L^2(S^{m-1})}^2 \right)^{1/2} \\
	&\leq&  C \sum_{j=1}^k \left( \int_{t_i+2(j-1)}^{t_i+2j} \norm{\partial_t u}_{L^2(S^{m-1})}^2 \right)^{1/2} +  C \eta.
\end{eqnarray*}
Here in the second line above, we used H\"older inequality and in the last line, we used 3).

By the definition of $G$, we have
\begin{equation*}
	\int_{t_i+2(j-1)}^{t_i+2j} \norm{\partial_t u}_{L^2(S^{m-1})}^2\leq G(t_i+2(j-1))- G(t_i+2j).
\end{equation*}
We can apply Lemma \ref{lem:discrete} with $a= G(t_i+2j)$ and $b=G(t_i+2(j-1))$ to get
\begin{eqnarray*}
	\int_{t_i}^s \norm{\partial_t u}_{L^2(S^{m-1})} &\leq& C \sum_{j=1}^k\left( G(t_i+2(j-1))^\theta - G(t_i+2j)^{\theta} \right) + C\eta \\
	&\leq& C \cdot G(t_i)^\theta + C\eta \\
	&\leq& C \eta^{2\theta}+ C\eta.
\end{eqnarray*}
If we choose $\eta$ small, we can have for any $s\in (t_i,T+2)$,
\begin{equation*}
	\norm{u(s)-\varphi}_{L^2(S^{m-1})}\leq \norm{u(t_i)-\varphi}_{L^2(S^{m-1})}+ \int_{t_i}^s \norm{\partial_t u}_{L^2(S^{m-1})} \leq \sigma_2/2.
\end{equation*}
Lemma \ref{lem:l2implyc3} gives the contradiction and proves that $T=\infty$.

We can repeat the above computation with $k=\infty$ to get
\begin{equation*}
	\int_{t_i}^{+\infty} \norm{\partial_t u}_{L^2(S^{m-1})} \leq C \eta^{2\theta}+ C\eta< \infty,
\end{equation*}
which shows that
\begin{equation*}
	\lim_{t\to \infty} \norm{u(t)-\varphi}_{L^2(S^{m-1})}=0.
\end{equation*}
As in Remark  \ref{rem:obv}, we have $u$ bounded in any $C^{k+1}(S^{m-1})$ norm. By interpolation, we know
\begin{equation*}
	\lim_{t\to \infty} \norm{u(t)-\varphi}_{C^k(S^{m-1})}=0.
\end{equation*}
\appendix

\section{The assumption of analyticity}\label{sec:analytic}
The purpose of this section is to justify (see Lemma \ref{lem:justify}) the use of the classical Lojasiewicz inequality to the function $f$ (see \eqref{eqn:f}) that arises in the Lyapunov-Schmidt reduction in Section \ref{sec:reduction}. Indeed, we shall show how the analyticity assumption of $N$ in Theorem \ref{thm:main} carries on step by step to that of $f$. These arguments, independent from the rest of the proof, are technical and hence presented in the appendix.

\subsection{Analytic function between Banach spaces}\label{subsec:background}
For completeness, we collect a few basic definitions and properties of analytic functions between abstract (complex) Banach spaces. We refer to Taylor's paper \cite{taylor1937} for proofs and more detailed discussions.

Let $E,E'$ and $E''$ be complex Banach spaces.

\begin{defn}\label{defn:analytic}
	(1) Let $f(x)$ be a function on $E$ to $E'$, defined in the neighborhood of $x_0\in E$. If for each $y\in E$, the limit
	\begin{equation*}
		\lim_{\tau\to 0} \frac{f(x_0+\tau y)-f(x_0)}{\tau}
	\end{equation*}
	exists (for $\tau\in \mathbb C$), then it is called the {\bf Gateaux} differential, denoted by $\delta f(x_0;y)$.
	
	(2) A function $f(x)$ on a domain $D$ of $E$ to $E'$ is said to be {\bf analytic} in $D$ if it is continuous and has a Gateaux differential at each point of $D$. A function is said to be analytic at a point $x_0$, if it is analytic in some neighborhood of $x_0$.
\end{defn}

Recall that the {\bf Fr\'echet} differential is defined to be the bounded linear map $Df(x_0)$ from $E$ to $E'$ such that
\begin{equation*}
	f(x_0+h)=f(x_0)+ Df(x_0) h + o(\norm{h}_{E}).
\end{equation*}
While the existence of Fr\'echet differential is obviously stronger than the Gateaux differential, Taylor proved
\begin{thm}\label{thm:smooth}
	[Theorem 3 and Theorem 12 in \cite{taylor1937}] If $f$ is analytic at $x_0$, then it admits Fr\'echet differentials of all orders in the neighborhood of that point. Moreover, the Fr\'echet differential and the Gateaux differential are equal.
\end{thm}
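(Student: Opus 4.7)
The plan is to upgrade the Gateaux differentiability in Definition \ref{defn:analytic} to full Fr\'echet differentiability by combining the continuity of $f$ with the Cauchy integral formula for vector-valued holomorphic functions of a single complex variable. First I would observe that for fixed $x_0$ and direction $y\in E$, the map $\tau\mapsto f(x_0+\tau y)$ is continuous on some neighborhood of $0\in\mathbb{C}$ and, by hypothesis, admits a complex derivative $\delta f(x_0+\tau y;y)$ at every point; it is therefore holomorphic as an $E'$-valued function of one complex variable, and the classical Cauchy integral formula (which extends to Banach-space-valued holomorphic functions) yields
\[ f(x_0+\tau y) = \frac{1}{2\pi i}\oint_{|\zeta|=r} \frac{f(x_0+\zeta y)}{\zeta-\tau}\,d\zeta \]
for $|\tau|<r$ whenever $r$ is small enough that $x_0+\zeta y$ remains in the domain of analyticity.

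Expanding $1/(\zeta-\tau)$ as a geometric series and integrating termwise gives a per-line power expansion $f(x_0+\tau y) = \sum_{n\geq 0} a_n(y)\tau^n$, where
\[ a_n(y)=\frac{1}{2\pi i}\oint_{|\zeta|=r}\frac{f(x_0+\zeta y)}{\zeta^{n+1}}\,d\zeta, \]
and the standard Cauchy estimate yields $\|a_n(y)\|_{E'}\leq r^{-n}\sup_{|\zeta|=r}\|f(x_0+\zeta y)\|_{E'}$. The critical move is then to use the continuity hypothesis to pick $\rho>0$ and $M$ with $\|f(x)\|_{E'}\leq M$ whenever $\|x-x_0\|_E\leq\rho$; choosing $r=\rho/\|y\|_E$ gives the uniform bound $\|a_n(y)\|_{E'}\leq M(\|y\|_E/\rho)^n$. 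Combined with the identity $a_n(\lambda y)=\lambda^n a_n(y)$ (which follows from uniqueness of coefficients in the $\tau$-expansion), this shows each $a_n$ is a bounded $n$-homogeneous polynomial on $E$, and by polarization it arises from a unique bounded symmetric $n$-linear form $A_n:E^n\to E'$.

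Finally, the series $\sum_{n\geq 0} A_n(h,\ldots,h)$ converges absolutely and uniformly on every ball $\|h\|_E\leq\rho'$ with $\rho'<\rho$, and the uniqueness just used forces it to equal $f(x_0+h)$ on that ball. From this power-series representation, Fr\'echet differentials of all orders exist, with the $n$-th Fr\'echet differential at $x_0$ given by $n!\,A_n$; and the uniqueness of directional derivatives forces the Fr\'echet and iterated Gateaux differentials to coincide. The main obstacle --- and the essential use of the continuity hypothesis --- is the passage from the per-line Cauchy expansion to a convergent \emph{joint} expansion in bounded multilinear forms; this is the infinite-dimensional analogue of Hartogs' separate-analyticity phenomenon, and without the continuity built into Definition \ref{defn:analytic} one could not obtain the uniform bound on a ball that underpins the multilinear norm estimate on $A_n$.
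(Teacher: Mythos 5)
The paper itself offers no proof of Theorem \ref{thm:smooth}: it is quoted verbatim from Taylor \cite{taylor1937}, so the comparison is with the classical argument there. Your sketch follows that classical route (per-line Cauchy integral formula for the $E'$-valued holomorphic function $\tau\mapsto f(x_0+\tau y)$, coefficient estimates from the local bound supplied by continuity, power-series representation, and identification of the differentials), and most of it is sound.

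There is, however, one genuine gap, and it sits at the only place where real work is needed: the sentence claiming that $a_n(\lambda y)=\lambda^n a_n(y)$ together with the bound $\norm{a_n(y)}\leq M(\norm{y}/\rho)^n$ already makes $a_n$ an $n$-homogeneous \emph{polynomial}, to which polarization can be applied. Complex homogeneity plus boundedness does not imply polynomiality, and the polarization formula presupposes that $a_n$ is the diagonal restriction of an $n$-linear map; for instance, on $E=\mathbb{C}^2$ the function $a(y)=y_1^{n+1}\overline{y_2}/(\,|y_1|^2+|y_2|^2)$ satisfies $a(\lambda y)=\lambda^n a(y)$, is continuous and bounded on bounded sets, yet is not a polynomial (not even additive when $n=1$). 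Note that even the linearity and boundedness of $a_1$ --- i.e.\ the existence of a candidate Fr\'echet derivative --- hinges on exactly this point, so it cannot be waved through. The missing step is the one you gesture at only rhetorically in your last paragraph: consider $g(\tau_1,\dots,\tau_n)=f(x_0+\tau_1y_1+\cdots+\tau_ny_n)$, which is separately holomorphic in each $\tau_i$ by the Gateaux hypothesis and locally bounded by the continuity hypothesis, hence jointly holomorphic in $(\tau_1,\dots,\tau_n)$ (Osgood/Hartogs in finitely many variables); then define $A_n(y_1,\dots,y_n)$ as the mixed Taylor coefficient extracted by an iterated Cauchy integral. Symmetry, additivity and homogeneity in each slot, the identity $A_n(y,\dots,y)=a_n(y)$, and the Cauchy-type norm estimate all follow from this construction, and with that in hand the remainder of your argument (uniform convergence of $\sum_n A_n(h,\dots,h)$ on a smaller ball, Fr\'echet differentials of all orders, and their coincidence with the iterated Gateaux differentials) goes through as you describe, both at $x_0$ and, by the same reasoning, at every nearby point as the theorem requires.
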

With the equivalence in mind, we recall a version of Inverse Function Theorem, which follows from (10.2.5) in the book of Dieudonn\'e \cite{dieudonne} (see also Section 2.7 of \cite{nirenbergnonliearfunctionalanalysis}).

\begin{thm}
	\label{thm:IFT} Let $E$ and $E'$ be two complex Banach spaces, $f$ an analytic function from a neighborhood $V$ of $x_0\in E$ to $E'$. If $Df(x_0)$ is a linear homeomorphism of $E$ onto $E'$, there exists an open neighborhood $U\subset V$ of $x_0$ such that the restriction of $f$ to $U$ is a homeomorphism of $U$ onto an open neighborhood of $y_0=f(x_0)$. Moreover, the inverse is analytic.
\end{thm}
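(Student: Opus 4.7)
The plan is to reduce to a fixed-point problem, solve it by the Banach contraction principle to produce a continuous local inverse, and then upgrade continuity to analyticity via a one-complex-variable slicing argument.

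First, composing $f$ with translations and with the fixed linear homeomorphism $Df(x_0)^{-1}$ reduces to the normalized case $x_0=0$, $y_0=0$, $Df(0)=\mathrm{Id}_E$. Write $f(x)=x+g(x)$ with $g(0)=0$ and $Dg(0)=0$. Since $f$ is analytic, Theorem \ref{thm:smooth} gives Fr\'echet differentials of all orders, so there exists $r>0$ such that $\|Dg(x)\|_{\mathrm{op}}\leq 1/2$ on $\overline{B_r(0)}$; consequently $g$ is $\tfrac{1}{2}$-Lipschitz on that ball. For each $y\in B_{r/2}(0)\subset E'$, the map $T_y(x):=y-g(x)$ sends $\overline{B_r(0)}$ into itself and is a $\tfrac{1}{2}$-contraction, so the Banach fixed-point theorem yields a unique $x=h(y)\in\overline{B_r(0)}$ with $f(h(y))=y$. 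The usual contraction estimates make $h$ Lipschitz, and setting $U:=f^{-1}(B_{r/2}(0))\cap B_r(0)$ one checks that $f:U\to B_{r/2}(0)$ is a homeomorphism with inverse $h|_{B_{r/2}(0)}$.

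Second, to promote $h$ from continuous to analytic, by Definition \ref{defn:analytic} it suffices, given continuity, to produce a Gateaux differential of $h$ at each $y_1\in B_{r/2}(0)$ in every direction $k\in E'$. Fix such $y_1,k$ and, for $\tau$ in a small disk $D\subset\mathbb{C}$, consider the curve $x(\tau):=h(y_1+\tau k)$, which is the unique fixed point in $\overline{B_r(0)}$ of the $\tfrac{1}{2}$-contraction $T_\tau(x):=y_1+\tau k-g(x)$. The Picard iterates $x_0(\tau)\equiv 0$, $x_{n+1}(\tau):=T_\tau(x_n(\tau))$, are analytic in $\tau$ as compositions of the affine map $\tau\mapsto y_1+\tau k$ with the Banach-valued analytic map $g$ (analyticity is preserved under sums and compositions), and they converge to $x(\tau)$ uniformly on $D$ by the contraction estimate. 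A Banach-valued function of one complex variable that is a uniform limit of holomorphic maps is itself holomorphic; hence $x(\tau)$ is holomorphic at $\tau=0$, and implicit differentiation of $x(\tau)+g(x(\tau))=y_1+\tau k$ gives the Gateaux differential $\delta h(y_1;k)=x'(0)=\bigl(I+Dg(h(y_1))\bigr)^{-1}k$, where the operator is invertible after shrinking $r$ (it is a small perturbation of $I$). This establishes the analyticity of $h$ on $B_{r/2}(0)$.

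The main obstacle is the middle assertion of the second step: that a uniformly convergent sequence of Banach-valued holomorphic functions of one complex variable is holomorphic. This is the single point at which the infinite-dimensionality of $E$ really enters; it is handled by composing the iterates with arbitrary continuous linear functionals on $E$ to reduce to the classical scalar Weierstrass uniform-convergence theorem, and then using Hahn--Banach (which norms elements of $E$) to recover a holomorphic Banach-valued limit via Cauchy's integral formula applied weakly. Everything else is the standard contraction argument together with the closure of analyticity under composition and sum already implicit in Definition \ref{defn:analytic} and Theorem \ref{thm:smooth}.
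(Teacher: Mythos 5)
Your proposal is correct, but it proves the theorem rather than matching the paper, which offers no proof at all: Theorem \ref{thm:IFT} is simply quoted from Dieudonn\'e (10.2.5) (see also Nirenberg, Section 2.7). Your route is the standard self-contained argument behind that citation: after normalizing to $x_0=0$, $y_0=0$, $Df(0)=\mathrm{Id}$, the contraction mapping principle applied to $T_y(x)=y-g(x)$ produces a Lipschitz local inverse $h$, and analyticity of $h$ is recovered by slicing along complex lines $\tau\mapsto y_1+\tau k$, showing the Picard iterates are holomorphic in $\tau$, invoking the Banach-valued Weierstrass/Dunford principle that a uniform limit of vector-valued holomorphic functions of one complex variable is holomorphic, and reading off the Gateaux differential $\delta h(y_1;k)=(I+Dg(h(y_1)))^{-1}k$; since the paper's Definition \ref{defn:analytic} only asks for continuity plus Gateaux differentiability, this suffices. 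What your approach buys is independence from the external reference and transparency about exactly where complex-analyticity enters (only through the one-variable slicing and the weak-to-strong holomorphy step); what the citation buys is brevity, since the contraction construction and the vector-valued Weierstrass theorem are textbook material. Two small points to tidy: the bound $\|Dg(x)\|\le 1/2$ near $0$ uses continuity of $x\mapsto Dg(x)$, which you should justify by noting that Theorem \ref{thm:smooth} gives a second Fr\'echet differential (so $Dg$ is itself differentiable, hence continuous, and $Dg(0)=0$); and no further shrinking of $r$ is needed for the invertibility of $I+Dg(h(y_1))$, since $\|Dg\|\le 1/2$ already gives it by a Neumann series.
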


\subsection{Complexification and analyticity}\label{subsec:complexification}
In section \ref{sec:LS}, we have defined the functional $\tilde{F}: C^{4,\beta}(V)\to \Real$ where $V$ is the pullback bundle $\varphi^* TN$ and a map $\mathcal N$ from $C^{4,\beta}(V)$ to $C^{0,\beta}(V)$. Instead of claiming the analyticity of $\tilde{F}$ and $\mathcal N$ directly, we consider its complexification.

$C^{4,\beta}(V)\otimes \mathbb C$ is understood to be the set of $u+iv$, where $u,v\in C^{4,\beta}(V)$, with a naturally defined norm. The same applies to $C^{0,\beta}(V)\otimes \mathbb C$. Obviously, they are complex Banach spaces.

A complexification of a map $f$ from a Banach space $E_1$ to another Banach space $E_2$ is some map $\tilde{f}$ from $E_1\otimes \mathbb C$ to $E_2\otimes \mathbb C$ such that $f$ is the real part of $\tilde{f}$ when restricted to (some open set of) $E_1$. Such complexifications are usually not unique. We are interested in analytic ones, that we define below (making using of special properties of $f$).

The complexification of $\tilde{F}$ and $\mathcal N$ relies on some particular form of the maps themselves. More precisely, we need the definition of $\tilde{F}(u)$ and $\mathcal N(u)$ to be given by a converging series. For this purpose, we start with an extrinsic point of view of $V$.

Since $N$ is embedded in $\Real^p$, we regard $T_y N$ as a subspace (not the affine space passing $y$) of $\Real^p$. Hence, the pullback bundle $V$ is the disjoint union of $V_\omega:=T_{\varphi(\omega)} N$ and a section $u$ of $V$ is a map from $S^{m-1}$ to $\Real^p$ satisfying
\begin{equation*}
	u(\omega)\in T_{\varphi(\omega)} N \subset \Real^p.
\end{equation*}
For a fixed smooth $\varphi$, the $C^{k,\beta}$ norm of $u$ as a map into $\Real^p$ agrees with the $C^{k,\beta}$ norm defined intrinsically using the pullback connection of $\varphi^*TN$. The same holds for various Sobolev norms.

For the complexification of $\tilde{F}$, we regard it as the composition of
\begin{equation*}
	C^{2,\beta}(V) \stackrel{\mathcal F}{\longrightarrow} C^\beta(S^{m-1},\Real) \stackrel{\mathcal I}{\longrightarrow} \Real,
\end{equation*}
where
\begin{equation*}
	\mathcal F(u)= \abs{\triangle_{S^{m-1}} \Pi(\varphi+u)}^2 +  (2m-8) \abs{\nabla_{S^{m-1}} \Pi(\varphi+u)}^2
\end{equation*}
and
\begin{equation*}
	\mathcal I(h)=\int_{S^{m-1}} h d\theta.
\end{equation*}
Recall that $\Pi$ is the nearest-point-projection of $N$ and the discussion works only for $u$ with small $C^0$ norm.

We claim that there exists an analytic map $\tilde{F}_C$ from $C^{2,\beta}(V)\otimes \mathbb C$ to $\mathbb C$ with $\tilde{F}$ as its real part.

The proof of the claim is the combination of following facts.
\begin{enumerate}
	\item[(F1)] The $\triangle_{S^{m-1}}$ from $C^{2,\beta}(V)$ to $C^{0,\beta}(V)$, $\nabla_{S^{m-1}}$ from $C^{2,\beta}(V)$ to $C^{1,\beta}(V)$ and $\mathcal I$ are bounded linear maps. Their complexifications, obtained by linear extension, are naturally bounded linear map and hence analytic.

	\item[(F2)] Let $\mathcal F_1$ be the map from $C^{0,\beta}(S^{m-1},\Real^p)$ to $C^{0,\beta}(S^{m-1},\Real)$ given by $u\mapsto \abs{u}^2$. Its complexification, $\mathcal F_{C,1}$ is given by
		\begin{equation*}
			\mathcal F_{C,1}(u+iv)= (u+iv)\cdot (u+iv).
		\end{equation*}
		It is analytic.

	\item[(F3)] If (as assumed in Theorem \ref{thm:main}) $\Pi(\varphi+\cdot)$ is an analytic map from $B_r(0)\subset \Real^p$ to $\Real^p$, then the map
		\begin{equation*}
			u\mapsto \Pi(\varphi+u)
		\end{equation*}
		has an analytic extension from $C^{2,\beta}(S^{m-1},\mathbb C^p)$ to itself. To see this, one first expands $\Pi(\varphi+u)$ into converging power series of $u$ and then replace $u$ by $u+iv$. It is then an exercise to check that the map thus obtained are analytic in the sense of Definition \ref{defn:analytic}.
\end{enumerate}

For the complexification of $\mathcal N$, it suffices to consider $\mathcal M_{\tilde{F}}(u)$. For $u$ and $v$ in $C^4(V)$, setting $\psi=\Pi(\varphi+u)$, we compute
\begin{eqnarray*}
	&& \frac{d}{dt}|_{t=0} \tilde{F}(u+tv)\\
	&=& \frac{d}{dt}|_{t=0} \int_{S^{m-1}} \abs{\triangle_{S^{m-1}} \Pi(\varphi+u+tv)}^2 + (2m-8) \abs{\nabla_{S^{m-1}}\Pi(\varphi+u+tv)}^2 d\theta \\
	&=& 2\int_{S^{m-1}} \triangle_{S^{m-1}}\psi \triangle_{S^{m-1}} D\Pi_{\varphi+u} v + (2m-8) \nabla_{S^{m-1}}\psi \nabla_{S^{m-1}} D\Pi_{\varphi+u} v d\theta \\
	&=& 2\int_{S^{m-1}} \left( \triangle_{S^{m-1}}^2 \psi - (2m-8) \triangle_{S^{m-1}}\psi \right)D\Pi_{\varphi+u} v d\theta \\
	&=& 2\int_{S^{m-1}} P_{\psi}\left( \triangle_{S^{m-1}}^2 \psi - (2m-8) \triangle_{S^{m-1}}\psi \right)  v d\theta.
\end{eqnarray*}
Here in the last line above, we used the fact that $D\Pi_{\varphi+u} v$ is nothing but the orthogonal projection from $\Real^p$ onto $T_{\psi} N$, which we denote by $P_{\psi}$.

Similar to the (bi)harmonic map case, $P_\psi(\triangle^2_{S^{m-1}} \psi - (2m-8) \triangle_{S^{m-1}} \psi)$ is the Euler-Lagrange operator of $F(\psi)$, denoted by $\mathcal M_F(\psi)$. For each $\omega\in S^{m-1}$, $\mathcal M_F(\psi)(\omega)$ lies in $T_\psi N\subset \Real^p$, while $v(\omega)$ is in $T_\varphi N$. Therefore,
\begin{equation}\label{eqn:MM}
	\mathcal M_{\tilde{F}}(u)=P_\varphi \mathcal M_F(\psi)
\end{equation}
where $\psi=\Pi(\varphi+u)$.

Since the projection $P_\varphi$ is a linear map that does not depend on $u$, the complexification of $\mathcal M_{\tilde{F}}(u)$ is reduced to that of $\mathcal M_{F}(\Pi(\varphi+u))$, which we regard as the composition of the following
\begin{enumerate}
	\item[(M1)] the map $\Pi(\varphi+\cdot)$ from $C^{4,\beta}(V)$ to $C^{4,\beta}(S^{m-1},\Real^p)$, which has been discussed in (F3) above;
	\item[(M2)] the map $\triangle_{S^{m-1}}^2 \psi -(2m-8)\triangle_{S^{m-1}} \psi$, from $C^{4,\beta}(S^{m-1},\Real^p)$ to $C^{0,\beta}(S^{m-1},\Real^p)$, which has been discussed in (F1) above;
	\item[(M3)] the projection $P_\psi$, is a $p$ by $p$ matrix that depends analytically on $\psi$, since $N$ is an analytic submanifold. Keeping in mind that $\psi=\Pi(\varphi + u)$ is known (see (M1) above) to be analytic map in $u$, the complexification of $P_\psi$ is given by expanding the analytic (matrix-valued) map $P_{\psi}=P_{\Pi(\varphi+u)}$ as a converging power series of $u$ and then replacing $u$ by $u+iv$ as in (F3).
\end{enumerate}

\subsection{Properties of the complexification}\label{subsec:properties}
Let's denote the complexification of $\mathcal M_{\tilde{F}}$ by $\mathcal M_{\tilde{F},C}$. In this section, we study the ellipticity of $\mathcal M_{\tilde{F},C}$ and the self-adjointness of its linearization at $0$. Please notice that although the ellipticity of $\mathcal M_{\tilde{F}}$ is quite natural, the ellipticity of $\mathcal M_{\tilde{F},C}$ as an operator between the complexified Banach spaces is not true in general. Fortunately, we have

\begin{lem}
	\label{lem:elliptic} The linearizations of both $\mathcal M_{\tilde{F}}$ and $\mathcal M_{\tilde{F},C}$ at $u=0$ are elliptic.
\end{lem}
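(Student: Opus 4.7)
The plan is to reduce both ellipticity statements to an explicit computation of the fourth-order principal part of $\mathcal{L}_{\tilde{F}}$ at $u=0$. The key observation will be that this principal part, after the projection, is simply a positive scalar times the identity on $V$, so invertibility of its symbol survives complexification trivially.

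First I would differentiate \eqref{eqn:MM}, $\mathcal{M}_{\tilde{F}}(u) = P_\varphi\, \mathcal{M}_F(\Pi(\varphi+u))$, at $u=0$. Since $D\Pi_\varphi v = v$ for $v\in T_\varphi N$, the chain rule yields
\begin{equation*}
\mathcal{L}_{\tilde{F}}(v) = P_\varphi\, D\mathcal{M}_F\big|_\varphi(v), \qquad v\in \Gamma(V).
\end{equation*}
Recalling that $\mathcal{M}_F(\psi) = P_\psi(\triangle^2_{S^{m-1}}\psi - (2m-8)\triangle_{S^{m-1}}\psi)$, the only order-four contribution in $v$ is $P_\varphi \triangle^2_{S^{m-1}} v$. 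Indeed, differentiating $P_\psi$ in $\psi$ yields a term of order zero in $v$ (because $P_\psi$ depends algebraically, without derivatives, on $\psi$), while the $\triangle_{S^{m-1}}\psi$ factor contributes terms of order at most two. These lower-order contributions do not affect the principal symbol.

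Next I would compute the principal symbol of $v\mapsto P_\varphi \triangle^2_{S^{m-1}} v$ at a cotangent covector $\xi\in T_\omega^* S^{m-1}\setminus 0$. Written componentwise in $\mathbb{R}^p$, the operator $\triangle^2_{S^{m-1}}$ has symbol $|\xi|^4 \mathrm{Id}_{\mathbb{R}^p}$. Restricting to $V_\omega = T_{\varphi(\omega)}N \subset \mathbb{R}^p$ and then applying $P_\varphi$, which acts as the identity on $V_\omega$, produces the symbol $|\xi|^4 \mathrm{Id}_{V_\omega}$. This is invertible for $\xi\ne 0$, so $\mathcal{L}_{\tilde{F}}$ is elliptic.

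For $\mathcal{L}_{\tilde{F},C}$, recall from Section \ref{subsec:complexification} that $\mathcal{M}_{\tilde{F},C}$ was obtained by expanding $\mathcal{M}_{\tilde{F}}$ in a convergent power series at $u=0$ and extending the coefficients $\mathbb{C}$-linearly. Hence its linearization at $0$ is precisely the $\mathbb{C}$-linear extension of $\mathcal{L}_{\tilde{F}}$ from $C^{4,\beta}(V)$ to $C^{4,\beta}(V)\otimes\mathbb{C}$, and its principal symbol is $|\xi|^4 \mathrm{Id}_{V_\omega\otimes\mathbb{C}}$, still invertible for every $\xi\ne 0$. The cautionary remark before the lemma concerns the general possibility that a real elliptic symbol could acquire complex eigenvectors in its kernel upon tensoring with $\mathbb{C}$; here this cannot happen since the symbol is a positive real scalar times the identity. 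The only mild obstacle is bookkeeping: one must verify that the Hessian of $\Pi$ and the derivatives of $P_\psi$ in $\psi$ contribute nothing to the fourth-order part of $\mathcal{L}_{\tilde{F}}$, which is immediate because these objects involve no differentiations of their arguments.
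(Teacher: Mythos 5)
Your computation matches the paper's proof: both extract the fourth-order principal part $P_\varphi P_\psi \triangle_{S^{m-1}}^2$ (with $P_\psi=P_\varphi$ at $u=0$), read off the symbol $\abs{\xi}^4$ times (a projection onto) $V_\omega$, and note that this remains invertible after $\mathbb{C}$-linear extension; the paper merely keeps $u$ a general small real map, which yields the extra remark that follows the lemma. One small correction to your side comment: a real invertible symbol can never lose invertibility upon tensoring with $\mathbb{C}$ (its determinant is a nonzero real number, hence nonzero in $\mathbb{C}$); the paper's actual caution is that the linearization of $\mathcal M_{\tilde{F},C}$ at a genuinely complex base point $u+iv$ with $v\neq 0$ may fail to be elliptic, because the analytically continued coefficients (such as $P_{\psi,C}$) can degenerate off the real slice, and this is why the lemma is pinned to $u=0$.
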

\begin{rem}
	In fact, as the following proof shows, $\mathcal M_{\tilde{F}}$ is elliptic for small $u$ such that it is defined and $\mathcal M_{\tilde{F},C}$ is elliptic at $u+iv\in C^{4,\beta}(V)\otimes \mathbb C$ if $v=0$.
\end{rem}

\begin{proof}
Neglecting the lower order part, it suffices to compute the linearization of
\begin{equation*}
	P_\varphi P_\psi \triangle_{S^{m-1}}^2 \Pi(\varphi+u)
\end{equation*}
where $\psi=\Pi(\varphi+u)$. If we do the computation at $u\in C^{4,\beta}(V)$ with infinitesimal increment $h$ and neglects all lower order terms, we get
\begin{equation}\label{eqn:linearization}
	P_\varphi P_\psi \triangle_{S^{m-1}}^2 h,
\end{equation}
whose symbol is for any $\xi\in T_\omega^*S^{m-1}$,
\begin{equation}\label{eqn:symbol}
	\xi\mapsto P_\varphi P_\psi \abs{\xi}^4 h.
\end{equation}
If $\xi$ is not zero, then this is clearly a linear isomorphism from the sections of $V$ onto itself, because $\psi$ is close to $\varphi$.

Now, for $\mathcal M_{\tilde{F},C}$, we denote the complexification of $P_\psi$ ($\Pi(\varphi+u)$) by $P_{\psi,C}$ ($\Pi_C(\varphi+u)$ respectively). Although we do not know any exact formula for them, it suffices for us to note that when computing \eqref{eqn:linearization}, (1) the contribution of $\Pi_C$ goes to the lower order terms and does not matter; (2) since we have assumed that $u\in C^{4,\beta}(V)$, by the definition of complexification, $P_{\psi,C}=P_{\psi}$. Therefore, we get the same symbol as in \eqref{eqn:symbol}, which is now an isomorphism from the sections of complexified-$V$ onto itself.
\end{proof}

If we denote the linearizations of $\mathcal M_{\tilde{F}}$ and $\mathcal M_{\tilde{F},C}$ at $u=0$ by $\mathcal L_{\tilde{F}}$ and ${\mathcal L}_{\tilde{F},C}$, then
\begin{lem}
	\label{lem:L} For any $u,v\in C^{4,\beta}(V)$,
	\begin{equation}\label{eqn:ll}
		{\mathcal L}_{\tilde{F},C}(u+iv) = \mathcal L_{\tilde{F}}(u)+i \mathcal L_{\tilde{F}}(v).
	\end{equation}
	In particular, ${\mathcal L}_{\tilde{F},C}$ is an elliptic and self-adjoint operator from $C^{4,\beta}(V)\otimes \mathbb C$ to $C^{0,\beta}(V)\otimes \mathbb C$.
\end{lem}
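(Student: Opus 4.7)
The plan is to extract equation \eqref{eqn:ll} directly from the explicit construction of $\mathcal M_{\tilde{F},C}$ carried out in subsection \ref{subsec:complexification}, and then read off ellipticity and self-adjointness essentially formally. The main work is packaged already in the analyticity established by (M1)--(M3) and in Lemma \ref{lem:elliptic}.

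First I would use the fact that, by (M1)--(M3), $\mathcal M_{\tilde{F}}$ is an analytic map from a neighborhood of $0 \in C^{4,\beta}(V)$ to $C^{0,\beta}(V)$ with $\mathcal M_{\tilde{F}}(0) = 0$ (since $u=0$ is a critical point of $\tilde F$). By Theorem \ref{thm:smooth}, analytic maps between Banach spaces expand in convergent homogeneous polynomials, so
\begin{equation*}
\mathcal M_{\tilde{F}}(u) \;=\; \sum_{k=1}^{\infty} \mathcal P_k(u),
\end{equation*}
where each $\mathcal P_k$ is a bounded symmetric $k$-linear form in disguise. The recipe of subsection \ref{subsec:complexification} is precisely to extend each $\mathcal P_k$ to its unique complex-multilinear extension $\mathcal P_{k,C}$ on $(C^{4,\beta}(V)\otimes\mathbb C)^k$ and sum, yielding $\mathcal M_{\tilde{F},C}(w) = \sum_{k\ge 1}\mathcal P_{k,C}(w)$. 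Differentiating term by term at $w = 0$ kills all contributions with $k \geq 2$ (which are $O(\|w\|^2)$) and leaves $\mathcal L_{\tilde{F},C} = \mathcal P_{1,C}$. Since $\mathcal P_{1,C}$ is by construction the complex-linear extension of $\mathcal P_1 = \mathcal L_{\tilde{F}}$, evaluating on $u+iv$ with $u,v \in C^{4,\beta}(V)$ gives exactly \eqref{eqn:ll}.

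Second, ellipticity of $\mathcal L_{\tilde{F},C}$ follows from \eqref{eqn:ll} together with Lemma \ref{lem:elliptic}. Indeed, \eqref{eqn:ll} shows that the principal symbol of $\mathcal L_{\tilde{F},C}$ at any $\xi \neq 0$ is the complex-linear extension of the principal symbol of $\mathcal L_{\tilde{F}}$, and the complex-linear extension of an $\mathbb R$-linear isomorphism of a finite-dimensional real vector space to its complexification is again an isomorphism.

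Third, for self-adjointness I would argue as follows. Since $\tilde F$ is a real-valued analytic functional and $\mathcal M_{\tilde{F}}$ is its $L^2$-gradient, the operator $\mathcal L_{\tilde{F}}$ is the Hessian of $\tilde F$ at the critical point $u = 0$, so it is symmetric with respect to the real $L^2$ inner product on sections of $V$. Equipping $L^2(V)\otimes\mathbb C$ with the natural sesquilinear Hermitian extension of this inner product, a direct unpacking using \eqref{eqn:ll} and the real symmetry gives $\langle \mathcal L_{\tilde{F},C}\alpha,\beta\rangle_{\mathbb C} = \langle \alpha,\mathcal L_{\tilde{F},C}\beta\rangle_{\mathbb C}$ for all $\alpha,\beta \in C^{4,\beta}(V)\otimes\mathbb C$. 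The only delicate point in the whole argument is the term-by-term extraction of $\mathcal P_{1,C}$ as the Fr\'echet derivative at $0$; this is precisely what Theorem \ref{thm:smooth} secures, so no additional analytic input is needed, and I expect no genuine obstacle.
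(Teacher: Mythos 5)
Your proof is correct in substance, but it is packaged differently from the paper's. The paper proves \eqref{eqn:ll} in two steps: along real directions ${\mathcal L}_{\tilde F,C}$ agrees with $\mathcal L_{\tilde F}$ (differentiate $\mathcal M_{\tilde F,C}(tu)=\mathcal M_{\tilde F}(tu)$ in real $t$), and then ${\mathcal L}_{\tilde F,C}(iv)=i\mathcal L_{\tilde F}(v)$ is checked factor by factor through the composition of $P_\varphi$, $P_\psi$, $\triangle_{S^{m-1}}$, $\nabla_{S^{m-1}}$ and $\Pi(\varphi+\cdot)$, using the defining power series of $\Pi_C$ and $P_{\psi,C}$; ellipticity of the complexified operator is then quoted from Lemma \ref{lem:elliptic}, and self-adjointness of $\mathcal L_{\tilde F}$ comes from symmetry of second derivatives of $\tilde F$, exactly as you argue. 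You instead expand the full composite $\mathcal M_{\tilde F}$ in homogeneous terms $\mathcal P_k$ and identify $\mathcal L_{\tilde F,C}$ with the complex-linear extension $\mathcal P_{1,C}$. The one point you state as a definition but should justify is the claim that the paper's $\mathcal M_{\tilde F,C}$ \emph{is} the term-wise complexified Taylor series of the composite: the paper builds $\mathcal M_{\tilde F,C}$ by complexifying the individual factors and composing, so you need the (easy) extra remark that the two analytic extensions coincide, e.g.\ by uniqueness of the analytic extension off the totally real subspace $C^{4,\beta}(V)\subset C^{4,\beta}(V)\otimes\mathbb C$; alternatively you can bypass the expansion entirely, since $\mathcal M_{\tilde F,C}$ is complex-analytic, so by Theorem \ref{thm:smooth} its Fr\'echet differential at $0$ is $\mathbb C$-linear and agrees with $\mathcal L_{\tilde F}$ on real directions, which yields \eqref{eqn:ll} at once. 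Your deduction of complex ellipticity from \eqref{eqn:ll} together with the real case of Lemma \ref{lem:elliptic} (the complexification of an invertible real symbol is invertible) is a slightly different but valid route from the paper, which proves ellipticity of $\mathcal L_{\tilde F,C}$ directly in Lemma \ref{lem:elliptic}; your Hermitian-extension argument for self-adjointness matches the paper's.
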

\begin{proof}
	By definition, ${\mathcal L}_{\tilde{F},C}(u)=\frac{d}{dt}|_{t=0} \mathcal M_{\tilde{F},C}(tu)=\frac{d}{dt}|_{t=0}\mathcal M_{\tilde{F}}(tu)=\mathcal L_{\tilde{F}}(u)$. Hence, it suffices to show
	\begin{equation*}
		\tilde{\mathcal L}_{\tilde{F},C}(iv) = i \mathcal L_{\tilde{F}}(v).
	\end{equation*}
	Since $\mathcal M_{\tilde{F}}$ is a composition of $P_\varphi$, $P_\psi$, $\triangle_{S^{m-1}}$, $\nabla_{S^{m-1}}$ and $\Pi(\varphi+\cdot)$, it suffices to show that \eqref{eqn:ll} holds for (the linearization of) each one of them. This is trivial for $P_\varphi$, $\triangle_{S^{m-1}}$ and $\nabla_{S^{m-1}}$, because they are linear operators and \eqref{eqn:ll} is exactly how their complexification is defined.

	For $\Pi(\varphi+\cdot)$, we recall that
	\begin{equation*}
		\Pi_C (\varphi + (u+iv)) = \sum_{k} a_k (u+iv)^k
	\end{equation*}
	and the series converges for small $u$ and $v$. \eqref{eqn:ll} then follows from direct computation.  The same argument works for $P_\psi$.

	The self-adjointness of $\mathcal L_{\tilde{F}}$ follows from expanding the following identity
	\begin{equation*}
		\frac{d}{ds}|_{s=0} \frac{d}{dt}|_{t=0} \tilde{F}(tu+sv)= \frac{d}{dt}|_{t=0} \frac{d}{ds}|_{s=0} \tilde{F}(tu+sv).
	\end{equation*}
	The self-adjointness of $\mathcal L_{\tilde{F},C}$ is then a consequence of \eqref{eqn:ll}.	
	
\end{proof}

Now, we state the result that motivates the discussion in this section.
\begin{lem}
	\label{lem:justify} For $f$ defined \eqref{eqn:f}, it is analytic function of $\xi$ in a neighborhood of $0\in \Real^l$.
\end{lem}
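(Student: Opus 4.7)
The plan is to complexify the Lyapunov--Schmidt construction itself and then appeal to the analytic inverse function theorem (Theorem \ref{thm:IFT}). Since all of the nontrivial analytic machinery has already been built in Subsections \ref{subsec:complexification}--\ref{subsec:properties}, what remains is to assemble a complex-analytic version of $\mathcal N$, invert it, and compose with $\tilde F_C$.

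First I would define
\begin{equation*}
  \mathcal N_C(w) := P_K w + \mathcal M_{\tilde F,C}(w)
\end{equation*}
on a neighborhood of $0\in C^{4,\beta}(V)\otimes\mathbb C$, with values in $C^{0,\beta}(V)\otimes\mathbb C$, where $P_K$ is extended $\mathbb C$-linearly. Because $P_K$ is bounded linear and $\mathcal M_{\tilde F,C}$ is analytic (Subsection \ref{subsec:complexification}), $\mathcal N_C$ is analytic in the sense of Definition \ref{defn:analytic} and its restriction to the real slice is the original $\mathcal N$ of Subsection \ref{sec:reduction}. By Lemma \ref{lem:L}, its Fréchet derivative at $0$ equals $P_K+\mathcal L_{\tilde F,C}$, which is elliptic by Lemma \ref{lem:elliptic}. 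The splitting identity \eqref{eqn:ll} shows that $\ker\mathcal L_{\tilde F,C}$ is exactly the complexification $K\otimes\mathbb C$ of the real kernel, so $P_K+\mathcal L_{\tilde F,C}$ is injective; combined with the Fredholm alternative for elliptic self-adjoint operators on Hölder spaces, this yields that $P_K+\mathcal L_{\tilde F,C}$ is a linear homeomorphism from $C^{4,\beta}(V)\otimes\mathbb C$ onto $C^{0,\beta}(V)\otimes\mathbb C$.

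Theorem \ref{thm:IFT} then produces an analytic inverse $\Psi_C$ on some complex neighborhood $U$ of $0\in C^{0,\beta}(V)\otimes\mathbb C$ that agrees with $\Psi$ on real inputs. I would then set
\begin{equation*}
  f_C(\zeta) := \tilde F_C\Bigl( \Psi_C\Bigl( \sum_{j=1}^l \zeta^j \varphi_j \Bigr) \Bigr), \qquad \zeta\in\mathbb C^l \text{ near } 0.
\end{equation*}
This is the composition of the bounded $\mathbb C$-linear map $\zeta\mapsto\sum \zeta^j\varphi_j$ into $C^{0,\beta}(V)\otimes\mathbb C$ (whose image lies in $U$ for small $\zeta$, since $K$ is finite dimensional with smooth basis), the analytic $\Psi_C$, and the analytic $\tilde F_C$, so $f_C$ is analytic near $0\in\mathbb C^l$. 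Restricting to real $\xi$ reproduces $f(\xi)$, and a real-valued function admitting a complex-analytic extension to a neighborhood of the origin is real analytic, which is the conclusion. The main subtle point is the invertibility of $P_K+\mathcal L_{\tilde F,C}$ on the complexified Hölder spaces: while the real version was handled in Subsection \ref{sec:reduction} via $L^2$-self-adjointness, one has to verify that the kernel does not enlarge upon complexification, and this is exactly what the splitting \eqref{eqn:ll} supplies.
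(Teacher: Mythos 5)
Your argument is correct and is essentially the paper's own proof: complexify $\mathcal N$ to $\mathcal N_C$, invoke Lemmas \ref{lem:elliptic} and \ref{lem:L} to see that its linearization $P_K+\mathcal L_{\tilde F,C}$ at $0$ is an isomorphism, apply the analytic inverse function theorem (Theorem \ref{thm:IFT}) to obtain the analytic $\Psi_C$, and then exhibit $f$ as the restriction to real $\xi$ of the analytic composition $f_C(\zeta)=\tilde F_C\bigl(\Psi_C\bigl(\sum_j \zeta^j\varphi_j\bigr)\bigr)$. The only difference is that you make explicit (via the splitting \eqref{eqn:ll} and the Fredholm alternative) why the kernel does not enlarge under complexification, a point the paper states but does not elaborate.
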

\begin{proof}
	Let $\mathcal N_C$ be the complexification of $\mathcal N$ defined in Section \ref{subsec:complexification}. Its linearization at $u=0$	is given by
	\begin{equation*}
		P_K + \tilde{\mathcal L}.
	\end{equation*}
	By the results above, this $\tilde{\mathcal L}$ is elliptic and self-adjoint with trivial kernel. Hence, the inverse function theorem (Theorem \ref{thm:IFT}) gives an inverse map $\Psi_C$, which is analytic, from a neighborhood of $0$ in $C^{0,\beta}(V)\otimes \mathbb C$ to a neighborhood of $0$ in $C^{4,\beta}(V)\otimes \mathbb C$. If $\tilde{F}_C$ is the complexification of $\tilde{F}$ given in Section \ref{subsec:complexification}, then $f$ in \eqref{eqn:f} is the restriction (to the real part of $(z_1,\cdots,z_l)$) of
	\begin{equation*}
		f_C(z_1,\cdots,z_l):=\tilde{F}_C (\Psi_C (\sum_{i=1}^l z_i \varphi_i)),
	\end{equation*}
	which is analytic in (a neighborhood of $0$ in) $\mathbb C^l$
\end{proof}

\section{Proof of Lemma \ref{Lemma1}}\label{sec:Lemma1}
For some $\delta>0$ to be determined, we will take $W=\set{f|\, \norm{f}_{C^{\beta}(V)}<\delta}$. For any $f_1,f_2$ in $W$, we have
\begin{eqnarray*}
	&&\Psi(f_1)-\Psi(f_2) \\
	&=& \int_0^1 \frac{d}{dt} \Psi (tf_1+(1-t)f_2) dt \\
	&=& \int_0^1 D\Psi|_{tf_1+(1-t)f_2} (f_1-f_2) dt.
\end{eqnarray*}
Hence, it suffices to show that for any $f\in W$, the linearization of $\Psi$ at $f$, $D\Psi|_f$ is uniformly bounded linear operator from $L^2(V)$ to $W^{4,2}(V)$. More precisely, we need to find $\delta>0$ and $C>0$ such that
\begin{equation*}
	\sup_{f\in W} \norm{D\Psi|_f}_{L(L^2(V),W^{4,2}(V))} \leq C.
\end{equation*}
Here $\norm{\cdot}_{L(L^2(V),W^{4,2}(V))}$ is the norm of linear operators.

Since $\Psi$ is the inverse of $\mathcal N$, it suffices to show that there exist $\delta'>0$ and $C>0$ such that if $W'=\set{u\in C^{4,\beta}(V)|\, \norm{u}_{C^{4,\beta}(V)}<\delta'}$,
\begin{equation}\label{eqn:dn}
	\inf_{u\in W'} \norm{D\mathcal N|_u}_{L(W^{4,2},L^2)} \geq C>0.
\end{equation}

The proof of \eqref{eqn:dn} consists of two steps. First, we show that
\begin{equation}\label{eqn:dn0}
	\norm{D\mathcal N|_0}_{L(W^{4,2},L^2)} \geq C>0.
\end{equation}
Recall that $\mathcal N=P_K + \mathcal M_{\tilde{F}}$, where $K$ is the kernel of $D\mathcal M_{\tilde{F}}|_0=\mathcal L_{\tilde{F}}$. For any $h\in W^{4,2}(V)$, we denote $h-P_K h$ by $h^\perp$. Since $\mathcal L_{\tilde{F}}$ is an elliptic operator with trivial kernel in the compliment space of $K$, there is a constant depending only on $\varphi$ such that
\begin{equation}\label{eqn:hperp}
	\norm{h^\perp}_{W^{4,2}}\leq C_0 \norm{\mathcal L_{\tilde{F}} h^\perp}_{L^2}.
\end{equation}
Since $K$ is a finite-dimensional space, there is $C_1>0$ such that
\begin{equation}\label{eqn:pkh}
	\norm{P_K h}_{W^{4,2}}\leq C_1 \norm{P_K h}_{L^2}.
\end{equation}
Combining \eqref{eqn:hperp} and \eqref{eqn:pkh} and noticing that the image of $\mathcal L_{\tilde{F}}$ is normal to $K$ in $L^2$, we get $C_2>0$ such that
\begin{equation}\label{eqn:goodh}
	\norm{h}_{W^{4,2}}\leq C_2 \norm{D\mathcal N|_0 h}_{L^2},
\end{equation}
which implies \eqref{eqn:dn0}.

The second step is to show that for $u\in W'$,
\begin{equation}\label{eqn:2step}
	\norm{(D\mathcal N|_u-D\mathcal N|_0)h}_{L^2}\leq C(\delta') \norm{h}_{W^{4,2}}
\end{equation}
for some $C(\delta')$ satisfying $\lim_{\delta'\to 0} C(\delta')=0$. Before the proof of \eqref{eqn:2step}, we notice that if $\delta'$ is small, \eqref{eqn:2step} and \eqref{eqn:goodh} imply that
\begin{equation*}
	\norm{h}_{W^{4,2}}\leq C \norm{D\mathcal N|_u h}_{L^2},
\end{equation*}
which finishes the proof of \eqref{eqn:dn} and hence the proof of Lemma \ref{Lemma1}.

For \eqref{eqn:2step}, we notice that the contribution of $P_K h$ cancels out and it suffices to bound
\begin{equation}\label{eqn:final}
	\norm{(D\mathcal M_{\tilde{F}}|_u-D\mathcal M_{\tilde{F}}|_0)h}_{L^2}.
\end{equation}
Recalling that $\mathcal M_{\tilde{F}}(u)=P_\varphi P_\psi (\triangle_{S^{m-1}}^2 \psi -(2m-8)\triangle_{S^{m-1}}\psi)$ with $\psi=\Pi(\varphi+u)$, we get
\begin{eqnarray}\label{eqn:dmh1}
	D\mathcal M_{\tilde{F}}|_uh
	&=& P_\varphi (DP)_\psi \left( \triangle_{S^{m-1}}^2 \psi - (2m-8) \triangle_{S^{m-1}}\psi \right) h \\ \nonumber
	&&+ P_\varphi P_\psi \left( \triangle_{S^{m-1}}^2 h -(2m-8) \triangle_{S^{m-1}}h \right)
\end{eqnarray}
and
\begin{eqnarray}\label{eqn:dmh2}
	D\mathcal M_{\tilde{F}}|_0h
	&=& P_\varphi (DP)_\varphi \left( \triangle_{S^{m-1}}^2 \varphi - (2m-8) \triangle_{S^{m-1}}\varphi \right) h \\ \nonumber
	&&+ P_\varphi  \left( \triangle_{S^{m-1}}^2 h -(2m-8) \triangle_{S^{m-1}}h \right)
\end{eqnarray}
Notice that  \eqref{eqn:dmh1} and \eqref{eqn:dmh2} are fourth order linear operator of $h$ and if we subtract them, the difference of the corresponding coefficients are bounded by using
\begin{equation*}
	\norm{\psi-\varphi}_{C^{4,\beta}}\leq C \norm{u}_{C^{4,\beta}(V)}.
\end{equation*}
\bibliographystyle{plain}
\bibliography{foo}

\begin{thebibliography}{10}

\bibitem{chang1999regularity}
Sun-Yung~A. Chang, Lihe Wang, and Paul~C. Yang.
\newblock A regularity theory of biharmonic maps.
\newblock {\em Comm. Pure Appl. Math.}, 52(9):1113--1137, 1999.

\bibitem{cheeger1994cone}
Jeff Cheeger and Gang Tian.
\newblock On the cone structure at infinity of {R}icci flat manifolds with
  {E}uclidean volume growth and quadratic curvature decay.
\newblock {\em Invent. Math.}, 118(3):493--571, 1994.

\bibitem{colding2014uniqueness}
Tobias~Holck Colding and William~P. Minicozzi, II.
\newblock On uniqueness of tangent cones for {E}instein manifolds.
\newblock {\em Invent. Math.}, 196(3):515--588, 2014.

\bibitem{dieudonne}
J.~Dieudonn\'e.
\newblock {\em Foundations of modern analysis}.
\newblock Academic Press, New York-London, 1969.
\newblock Enlarged and corrected printing, Pure and Applied Mathematics, Vol.
  10-I.

\bibitem{hong2005}
Min-Chun Hong and Changyou Wang.
\newblock Regularity and relaxed problems of minimizing biharmonic maps into
  spheres.
\newblock {\em Calc. Var. Partial Differential Equations}, 23(4):425--450,
  2005.

\bibitem{lamm2008}
Tobias Lamm and Tristan Rivi\`ere.
\newblock Conservation laws for fourth order systems in four dimensions.
\newblock {\em Comm. Partial Differential Equations}, 33(1-3):245--262, 2008.

\bibitem{lin1999gradient}
Fang-Hua Lin.
\newblock Gradient estimates and blow-up analysis for stationary harmonic maps.
\newblock {\em Ann. of Math. (2)}, 149(3):785--829, 1999.

\bibitem{luckhaus1988partial}
Stephan Luckhaus.
\newblock Partial {H}\"older continuity for minima of certain energies among
  maps into a {R}iemannian manifold.
\newblock {\em Indiana Univ. Math. J.}, 37(2):349--367, 1988.

\bibitem{nirenbergnonliearfunctionalanalysis}
Louis Nirenberg.
\newblock {\em Topics in nonlinear functional analysis}, volume~6 of {\em
  Courant Lecture Notes in Mathematics}.
\newblock New York University, Courant Institute of Mathematical Sciences, New
  York; American Mathematical Society, Providence, RI, 2001.
\newblock Chapter 6 by E. Zehnder, Notes by R. A. Artino, Revised reprint of
  the 1974 original.

\bibitem{scheven2008dimension}
Christoph Scheven.
\newblock Dimension reduction for the singular set of biharmonic maps.
\newblock {\em Adv. Calc. Var.}, 1(1):53--91, 2008.

\bibitem{schoen1982regularity}
Richard Schoen and Karen Uhlenbeck.
\newblock A regularity theory for harmonic maps.
\newblock {\em J. Differential Geom.}, 17(2):307--335, 1982.

\bibitem{simon1983asymptotics}
Leon Simon.
\newblock Asymptotics for a class of nonlinear evolution equations, with
  applications to geometric problems.
\newblock {\em Ann. of Math. (2)}, 118(3):525--571, 1983.

\bibitem{simon1996theorems}
Leon Simon.
\newblock {\em Theorems on regularity and singularity of energy minimizing
  maps}.
\newblock Lectures in Mathematics ETH Z\"urich. Birkh\"auser Verlag, Basel,
  1996.
\newblock Based on lecture notes by Norbert Hungerb{\"u}hler.

\bibitem{struwe2008partial}
Michael Struwe.
\newblock Partial regularity for biharmonic maps, revisited.
\newblock {\em Calc. Var. Partial Differential Equations}, 33(2):249--262,
  2008.

\bibitem{strzelecki2003}
Pawe\l Strzelecki.
\newblock On biharmonic maps and their generalizations.
\newblock {\em Calc. Var. Partial Differential Equations}, 18(4):401--432,
  2003.

\bibitem{taylor1937}
Angus~E. Taylor.
\newblock Analytic functions in general analysis.
\newblock {\em Ann. Scuola Norm. Sup. Pisa Cl. Sci. (2)}, 6(3-4):277--292,
  1937.

\bibitem{wang2004biharmonic}
Changyou Wang.
\newblock Biharmonic maps from {$\bold R^4$} into a {R}iemannian manifold.
\newblock {\em Math. Z.}, 247(1):65--87, 2004.

\bibitem{wang2004remarks}
Changyou Wang.
\newblock Remarks on biharmonic maps into spheres.
\newblock {\em Calc. Var. Partial Differential Equations}, 21(3):221--242,
  2004.

\bibitem{wang2004stationary}
Changyou Wang.
\newblock Stationary biharmonic maps from {$\Bbb R^m$} into a {R}iemannian
  manifold.
\newblock {\em Comm. Pure Appl. Math.}, 57(4):419--444, 2004.

\bibitem{yang2003uniqueness}
Baozhong Yang.
\newblock The uniqueness of tangent cones for {Y}ang-{M}ills connections with
  isolated singularities.
\newblock {\em Adv. Math.}, 180(2):648--691, 2003.

\end{thebibliography}

\end{document}